\numberwithin{equation}{section}
\theoremstyle{plain}
\newtheorem{theorem}{Theorem}[section]
\newtheorem{proposition}[theorem]{Proposition}
\newtheorem{lemma}[theorem]{Lemma}
\newtheorem{corollary}[theorem]{Corollary}
\theoremstyle{definition}
\theoremstyle{remark}
\renewcommand{\bar}{\overline}
\newcommand{\abs}[1]{\lvert#1\rvert}
\newcommand{\N}{\mathbb{N}}
\newcommand{\Z}{\mathbb{Z}}
\newcommand{\Q}{\mathbb{Q}}
\tikzstyle{vertex}=[circle, draw, fill=black, inner sep=0pt, minimum width=6pt]
\tikzstyle{vert}=[circle, draw=black, fill=white, inner sep=0pt, minimum width=6pt]
\tikzstyle{zc}=[circle, draw, fill=black, inner sep=0pt, minimum width=6pt]
\tikzstyle{pc}=[circle, draw=black, inner sep=1pt, minimum width=10pt, font=\tiny] 
\tikzstyle{nc}=[circle, draw=black, style=dashed, inner sep=1pt, minimum width=10pt, font=\tiny] 
\tikzstyle{pedge}=[draw,-]
\tikzstyle{wwedge}=[draw,-,double,double distance = 2pt,]
\tikzstyle{wwwedge}=[draw,-,postaction={decorate}, decoration={markings,mark = at position 0.65 with {\arrow{stealth} },mark = at position 0.45 with {\arrow{stealth} }, mark = at position 0.55 with {\arrow{stealth} } }]
\tikzstyle{wwwwedge}=[draw,-,postaction={decorate}, decoration={markings,mark = at position 0.7 with {\arrow{stealth} },mark = at position 0.4 with {\arrow{stealth} }, mark = at position 0.5 with {\arrow{stealth} }, mark = at position 0.6 with {\arrow{stealth} } }]
\tikzstyle{wwwnedge}=[draw,densely dashed,postaction={decorate}, decoration={markings,mark = at position 0.65 with {\arrow{stealth} },mark = at position 0.45 with {\arrow{stealth} }, mark = at position 0.55 with {\arrow{stealth} } }]
\tikzstyle{wwnedge}=[draw=black,densely dashed,double=white,double distance = 2pt]
\tikzstyle{nedge}=[draw,densely dashed]
\tikzstyle{weight}= [draw=white, fill=white, font=\scriptsize]
\tikzstyle{weight2}= [font=\scriptsize]
\tikzstyle{empty}=[circle, draw=white, inner sep=2pt, fill=white, minimum width=4pt]
\tikzstyle{ghost}=[circle, draw=black, inner sep=2pt, style=densely dashed, minimum width=8pt, font=\tiny]
\tikzstyle{ghostc}=[circle, draw=black, inner sep=1pt, style=densely dashed, minimum width=10pt, font=\tiny]
\begin{document}

\begin{frontmatter}
	\title{ Cyclotomic matrices over real quadratic integer rings } 
	\author{ Gary Greaves }
	\address{Mathematics Department, Royal Holloway, Egham, Surrey, TW20 0EX, UK.}
	\ead{g.greaves@rhul.ac.uk}

	\begin{abstract}
		We classify all cyclotomic matrices over real quadratic integer rings and we show that this classification is the same as classifying cyclotomic matrices over the compositum all real quadratic integer rings, $\mathcal R$.
		Moreover, we enumerate a related class of symmetric $\mathcal R$-matrices; those $\mathcal R$-matrices whose eigenvalues are contained inside the interval $[-2,2]$ but whose characteristic polynomials are not in $\Z[x]$.
	\end{abstract}

	\begin{keyword}
		Cyclotomic matrices, real quadratic integers. MSC: 05C50, 11C20, 15B33.
	\end{keyword}
\end{frontmatter}


\section{Introduction}
\label{sec:intro}
Let $A$ be an $n \times n$ Hermitian matrix with its characteristic polynomial $\chi_A(x) = \det(xI - A)$ having integer coefficients.
If $A$ also has all its eigenvalues in the interval $[-2,2]$ then we call it a \textbf{cyclotomic} matrix.
Cyclotomic matrices were first studied explicitly by McKee and Smyth who classified all cyclotomic matrices over the integers~\cite{McKee:IntSymCyc07}.
They are so named since, by a theorem of Kronecker~\cite{Kron:cyclo57}, for a cyclotomic matrix $A$, the polynomial $z^n\chi_A(z+1/z)$ is the product of some cyclotomic polynomials.
Over imaginary quadratic integer rings, cyclotomic matrices have been classified \cite{Greaves:CycloEG11, GTay:cyclos10}.
In this paper, we use the methods from the author's earlier paper \cite{Greaves:CycloEG11} to classify cyclotomic matrices over real quadratic integer rings, thereby completing the classification of cyclotomic matrices over quadratic integer rings.
We focus on the complication one encounters when studying cyclotomic matrices over the real quadratic integers, namely, the question of the integrality of the characteristic polynomial.
We also take advantage of a feature of the real quadratic integers, i.e., the ordering, and classify cyclotomic matrices over the compositum of all real quadratic integer rings.

Part of the reason for the study of cyclotomic matrices stems from a conjecture of Lehmer \cite{Lehmer:33Cyclo}.
Let $f(x) = (x - \alpha_1)\dots (x - \alpha_n)$ be a monic polynomial with integer coefficients.
Its \textbf{Mahler measure} \cite{Mahler:Measure1962} is defined as
\[
	M(f) = \prod_{j = 1}^n \max(1, \abs{\alpha_j}).
\]
Lehmer's problem is one of finding a monic integer polynomial $f$ with smallest possible Mahler measure $M(f)$ such that $M(f) > 1$.
The polynomial
\[
	L(z) = z^{10} + z^9 -z^7 - z^6 - z^5 - z^4 - z^3 + z + 1,
\]
whose larger real zero is $\Omega = 1.176280818\dots$, is known as Lehmer's polynomial.
The Mahler measure $M(L) = \Omega$ of Lehmer's polynomial is the smallest known for a monic integer polynomial and Lehmer's conjecture states that this is in fact \emph{the} smallest possible; see Smyth's expository article~\cite{Smyth:MMsurvey08} for a discussion of the conjecture.
Using cyclotomic matrices, Lehmer's conjecture has been verified for classes of polynomials coming from Hermitian matrices over certain rings of integers \cite{McKee:noncycISM09,GTay:Lehmer11}.

Cyclotomic matrices have been implicitly studied in spectral graph theory.
In 1970, Smith \cite{Smith:CycloG} obtained a classification of all graphs (whose adjacency matrices are symmetric $\{0,1\}$-matrices with only zeros on the diagonal) having largest eigenvalue at most $2$.
Effectively, Smith classified cyclotomic $\{0,1\}$-matrices by showing that each one is a principal submatrix of an adjacency matrix of one of the graphs $\tilde A_n$, $\tilde D_n$, $\tilde E_6$, $\tilde E_7$, and $\tilde E_8$.
These ADE graphs are the ubiquitous simply-laced affine Dynkin diagrams, see Bourbaki's book \cite{Bour:LieGroups} for their description. 
These graphs turned out to have importance in the study of graphs with bounded spectra, see the survey by Cvetkovi\'{c} and Rowlinson~\cite{Cvet:GraphSurvey90}.
Hence, the classification of cyclotomic matrices over larger sets containing $\{0,1\}$ can be seen as a generalisation of some of this work.
Cameron, Goethals, Seidel, and Shult ~\cite{Cam:LineSystems76} classified all graphs having smallest eigenvalue $-2$; they showed that one can obtain any such graph by searching inside the root systems $A_n$, $D_n$, $E_6$, $E_7$, and $E_8$.
This idea of searching inside root systems was then used later by McKee and Smyth as a part of the classification of cyclotomic integer symmetric matrices.

Let $R_i$ be an imaginary quadratic integer ring $\mathcal O_{\Q(\sqrt{d})}$ where $d < 0$. 
For Hermitian $R_i$-matrices $A$, the integrality of the characteristic polynomial is automatic.
The nontrivial Galois automorphism $\sigma$ of $\Q(\sqrt{d})$ over $\Q$ is simply complex conjugation.
Applying $\sigma$ to the coefficients of $\chi_A$ gives $\sigma(\chi_A(x)) = \det(xI - \sigma(A)) = \det(xI - A^\top) = \chi_A(x)$.
Hence, the coefficients of $\chi_A$ are rational, and since they are also algebraic integers, they must be in $\Z$.
Therefore all Hermitian $R_i$-matrices whose eigenvalues are contained inside the interval $[-2,2]$ are cyclotomic.
However, over real quadratic integer rings, things are not so simple.
For example, the matrix
\[
	\begin{pmatrix}
		\sqrt{2} & 1 \\
		1 & 0
	\end{pmatrix}
\]
has all its eigenvalues lying in the interval $[-2,2]$ but its characteristic polynomial does not have integral coefficients, hence it is not cyclotomic.
This complication of having to worry about whether or not the characteristic polynomial is integral is the reason we treat real quadratic integer rings separately to the imaginary quadratic integer rings.
There is, though, a redeeming feature of working over subrings of the real numbers; here we have a notion of nonnegativity and we can therefore make use of Perron-Frobenius theory.

Let $\mathcal R$ be the compositum of all real quadratic integer rings $\mathcal O_{\Q(\sqrt{d})}$ where $d > 1$ is squarefree.
Given a symmetric $\mathcal R$-matrix $A$, let $L_A$ denote the smallest normal extension of $\Q$ that contains all the entries of $A$.
We define $\mathfrak S_n^\prime$ to be the set of $n \times n$ symmetric $\mathcal R$-matrices $A$ such that the spectrum of $\sigma(A)$ is contained in $[-2,2]$ for all $\sigma \in \operatorname{Gal}(L_A / \Q)$.
We also define a finer set $\mathfrak S_n$ as the set of matrices from $\mathfrak S^\prime_n$ having integral characteristic polynomials.
It is clear from the above example that $\mathfrak S_2^\prime$ \emph{strictly} contains $\mathfrak S_2$.
Notice that $\mathfrak S_n$ is precisely the set of $n \times n$ cyclotomic $\mathcal R$-matrices.
In section~\ref{sec:equalsets}, we show that for $n > 6$ the two sets $\mathfrak S_n$ and $\mathfrak S^\prime_n$ are equal.

The article is organised as follows.
In Section~\ref{sec:useful} we set up the notion of equivalence used in the classification and reduce the problem to considering only indecomposable matrices.
Our results are then stated in Section~\ref{sec:results}, and proved in Section~\ref{sec:pf}.
Note that the bulk of the classification of cyclotomic matrices over real quadratic integer rings follows from the author's previous paper \cite{Greaves:CycloEG11} and we merely allude to the proof of Theorem~\ref{thm:class} in Section~\ref{sec:results}.

\section{Equivalence and interlacing}

\label{sec:useful}

In this section, we describe the equivalence classes that are used in the classification.
Our definition of equivalence is a natural extension of that used in the classification of cyclotomic $\Z$-matrices \cite{McKee:IntSymCyc07}.
Let $\mathcal R$ be the compositum of real quadratic integer rings and let $R^\prime$ be some finite subset of $\mathcal R$.
Let $R$ be the ring generated by the elements of $R^\prime$ over $\Z$ and let $K$ be the normal closure of the field generated by the elements of $R^\prime$ over $\Q$.
We write $M_n(R)$ for the ring of $n \times n$ matrices over the ring $R$. 
Denote by $O_n(\Z)$ the orthogonal group of matrices $Q$ in $M_n(\Z)$ which satisfy $QQ^\top = Q^\top Q = I$, where $Q^\top$ is the transpose of $Q$.
Let $M$ be a matrix from $M_n(R)$.
Conjugation by a matrix in $O_n(\Z)$ preserves the eigenvalues and the resulting matrix remains in $M_n(R)$.

If $\chi_M(x) \in \Z[x]$ then, since they are rational integers, the coefficients of the characteristic polynomial of $M$ are invariant under the action of $\operatorname{Gal}(K/\Q)$.
For $A$ and $B$ in $M_n(R)$, we say that $A$ is \textbf{strongly equivalent} to $B$ if $A = \sigma(QBQ^\top)$ for some $Q \in O_n(\Z)$ and some $\sigma \in \operatorname{Gal}(K/\Q)$, where $\sigma$ is applied componentwise to $QBQ^\top$.
The matrices $A$ and $B$ are merely called \textbf{equivalent} if $A$ is strongly equivalent to $\pm B$.
Note that, given a matrix $M \in \mathfrak S^\prime_n \backslash \mathfrak S_n$ all the elements of its equivalence class are also in $\mathfrak S^\prime_n \backslash \mathfrak S_n$.
Also, any two strongly equivalent matrices from $\mathfrak S_n$ have the same set of eigenvalues.

We use graphs as a convenient representation of an equivalence class of matrices.
An \textbf{$R$-graph} $G$ is an undirected weighted graph $(G,w)$ whose weight function $w$ maps pairs of vertices to elements of $R$ and satisfies $w(u,v) = w(v,u)$ for all vertices $u,v \in V(G)$.
The adjacency matrix $A = (a_{uv})$ of $G$ has $a_{uv} = w(u,v)$.
If $G$ is a triangle/cycle/tree/path/etc.\ then we call $G$ an $R$-triangle/cycle/tree/path/etc.
For every vertex $v$, the \textbf{charge} of $v$ is just the number $w(v,v)$.
A vertex with nonzero charge is called \textbf{charged}, those with zero charge are called \textbf{uncharged}.
By simply saying ``$G$ is a graph,'' we mean that $G$ is a $T$-graph where $T$ is some unspecified subset of the real numbers.

Now, $O_n(\Z)$ is generated by permutation matrices and diagonal matrices of the form
\[
	\operatorname{diag}(1,\dots,1,-1,1,\dots,1).
\]
Let $D$ be such a diagonal matrix having $-1$ in the $j$-th position.
Conjugation by $D$ is called a \textbf{switching} at vertex $j$.
A switching at vertex $j$ has the effect of multiplying all the incident edge-weights of $j$ by $-1$.
This notion of switching has been seen before as \emph{Seidel switching} \cite{Seid:Signed94}.
The effect of conjugation by permutation matrices is just a relabelling of the vertices of the corresponding graph.
Since all possible vertex-labellings of a graph are strongly equivalent, we do not have cause to label the vertices of our graphs.
The notions of equivalence and strong equivalence carry through to graphs naturally.
Throughout this paper we will interchangeably speak of both graphs and their adjacency matrices.



Next, we state a theorem of Cauchy \cite{Cau:Interlace,Fisk:Interlace05, Hwang:Interlace04} which we refer to as \emph{interlacing}.

\begin{theorem}[Interlacing] \label{thm:interlacing} Let $A$ be an $n~\times~n$ real symmetric matrix with eigenvalues $\lambda_1 \leqslant \dots \leqslant \lambda_n$. Let $B$ be an $(n - 1)~\times~(n - 1)$ principal submatrix of $A$ with eigenvalues $\mu_1 \leqslant \dots \leqslant \mu_{n-1}$. Then the eigenvalues of $A$ and $B$ interlace. Namely,
	\[
		\lambda_1 \leqslant \mu_1 \leqslant \lambda_2 \leqslant \mu_2 \leqslant \dots \leqslant \mu_{n-1} \leqslant \lambda_n.
	\]
\end{theorem}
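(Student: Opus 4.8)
The plan is to derive the interlacing inequalities from the Courant--Fischer min--max characterisation of eigenvalues. Recall that for a real symmetric $m \times m$ matrix $M$ with eigenvalues $\nu_1 \leqslant \dots \leqslant \nu_m$ and Rayleigh quotient $R_M(x) = (x^\top M x)/(x^\top x)$, one has, for each $k \in \{1,\dots,m\}$,
\[
	\nu_k = \min_{\substack{V \subseteq \R^m\\ \dim V = k}}\ \max_{0 \neq x \in V} R_M(x)
	     = \max_{\substack{W \subseteq \R^m\\ \dim W = m-k+1}}\ \min_{0 \neq x \in W} R_M(x).
\]
First I would establish (or simply cite) this formula; the standard argument diagonalises $M$ in an orthonormal eigenbasis $e_1,\dots,e_m$ and observes that any $k$-dimensional subspace $V$ must meet $\operatorname{span}(e_k,\dots,e_m)$ nontrivially, forcing $\max_{0 \neq x \in V} R_M(x) \geqslant \nu_k$, while the choice $V = \operatorname{span}(e_1,\dots,e_k)$ attains equality.

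Next, since conjugating $A$ by a permutation matrix changes neither its eigenvalues nor the multiset of eigenvalues of the chosen principal submatrix, I may assume that $B$ is obtained from $A$ by deleting its last row and column. Identify $\R^{n-1}$ with the hyperplane $U = \{x \in \R^n : x_n = 0\}$; for $x \in \R^{n-1}$ with image $\tilde x \in U$ we have $\tilde x^\top A \tilde x = x^\top B x$ and $\tilde x^\top \tilde x = x^\top x$, so that $R_B(x) = R_A(\tilde x)$, the last row and column of $A$ contributing nothing.

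Now fix $k \in \{1,\dots,n-1\}$ and apply the two formulas. Every $k$-dimensional subspace of $\R^{n-1}$ embeds as a $k$-dimensional subspace of $\R^n$ lying inside $U$, so the outer minimum defining $\mu_k$ ranges over a subfamily of the subspaces defining $\lambda_k$; hence $\mu_k \geqslant \lambda_k$. Dually, every $(n-k)$-dimensional subspace of $\R^{n-1}$ embeds as an $(n-k)$-dimensional subspace of $\R^n$, and since $n-k = (n-1)-k+1 = n-(k+1)+1$, the max--min formula presents $\mu_k$ as a maximum over a subfamily of the subspaces appearing in the max--min formula for $\lambda_{k+1}$; hence $\mu_k \leqslant \lambda_{k+1}$. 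Together these give $\lambda_k \leqslant \mu_k \leqslant \lambda_{k+1}$ for all $k$, which is exactly the asserted chain.

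I expect the only real obstacle to be keeping the index bookkeeping straight in the two Courant--Fischer formulas — in particular pairing $(n-k)$-dimensional subspaces of $\R^{n-1}$ with the max--min formula for $\lambda_{k+1}$ rather than $\lambda_k$ — together with the elementary check that the Rayleigh quotient is preserved under the embedding $\R^{n-1} \cong U$. Both are routine once that embedding is fixed.
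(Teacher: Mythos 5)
Your proof is correct: the Courant--Fischer min--max argument is the standard derivation of Cauchy interlacing, and your index bookkeeping (pairing the $(n-k)$-dimensional subspaces of $\R^{n-1}$ with the max--min formula for $\lambda_{k+1}$) is handled properly. The paper itself gives no proof of this statement --- it is quoted as a classical theorem with citations to the literature --- so there is nothing to compare against, but your argument is the one found in the standard references and is sound.
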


Define the \textbf{degree} of a vertex $v \in V(G)$ as
\[
	\sum_{u \in V(G)} w(u,v)^2.
\]

\begin{lemma}\label{lem:maxDeg4}
	Let $G$ be a graph with a vertex $v$ of degree $d > 4$.
	Then $G$ does not correspond to any matrix in $\mathfrak S^\prime_n$.
\end{lemma}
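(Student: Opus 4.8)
The plan is to use interlacing (Theorem~\ref{thm:interlacing}) to reduce to a small local configuration around the high-degree vertex $v$, and then to exhibit a principal submatrix whose eigenvalues cannot all lie in $[-2,2]$, regardless of which Galois conjugate we apply. Suppose $G$ has adjacency matrix $A \in \mathfrak S_n^\prime$, and let $v$ have degree $d = \sum_{u} w(u,v)^2 > 4$. Since each $w(u,v)$ is a nonzero algebraic integer in a real quadratic field, $w(u,v)^2$ is a totally positive algebraic integer, so there is some Galois conjugate $\sigma$ for which $\sum_{u} \sigma(w(u,v))^2 \geqslant d > 4$ as well (in fact every conjugate works, since being totally positive each $\sigma(w(u,v))^2 > 0$, but one does need the sum to stay large — this is where one must be slightly careful, and I address it below). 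Replacing $A$ by $\sigma(A)$, which is still required to have spectrum in $[-2,2]$, we may assume we are working with a real symmetric matrix in which $v$ still has ``degree'' exceeding $4$.

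Next I would pass to the principal submatrix $B$ indexed by $v$ together with its neighbours $u_1, \dots, u_k$ (those $u$ with $w(u,v) \neq 0$), and then further to the star-like submatrix obtained by also zeroing out the charges and the edges among the $u_i$ — more precisely, I would not zero them but instead bound $\lambda_{\max}(B)^2 + \lambda_{\min}(B)^2$ from below using the trace of $B^2$. The cleanest route: let $C$ be the principal submatrix on $\{v, u_1, \dots, u_k\}$; by interlacing the extreme eigenvalues of $C$ are squeezed between those of $A$, so all eigenvalues of $C$ lie in $[-2,2]$, hence $\operatorname{tr}(C^2) \leqslant 4(k+1)$. On the other hand, a direct computation gives $\operatorname{tr}(C^2) = \sum_{i,j} c_{ij}^2 \geqslant 2\sum_{i=1}^k w(u_i,v)^2 + \sum_i c_{ii}^2 = 2d + (\text{nonneg})$. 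This alone is not a contradiction, so the trace bound must be sharpened: I would instead use the sub-submatrix on $\{v, u_1\}$ (a $2\times 2$ matrix) together with an averaging/eigenvector argument. Actually the standard and correct argument is: the $2\times 2$ principal submatrix $\begin{pmatrix} w(v,v) & w(u_i,v) \\ w(u_i,v) & w(u_i,u_i)\end{pmatrix}$ has eigenvalues in $[-2,2]$, forcing $|w(u_i,v)| \leqslant 2$, and moreover considering the quadratic form of $C$ on the vector supported at $v$ and one neighbour shows each individual squared weight is at most $4$; to get a contradiction from $\sum w(u_i,v)^2 > 4$ one uses the Rayleigh quotient of $C$ on a cleverly chosen test vector concentrated near $v$.

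Concretely, let $x$ be the unit vector with $x_v = 1/\sqrt{2}$ and mass $1/(2d)$ appropriately signed on each neighbour so that $x^\top C x$ picks up the full $\sum_i |w(u_i,v)|$ contribution: choosing $x_{u_i} = \operatorname{sgn}(w(u_i,v)) \cdot w(u_i,v) / \sqrt{2d}$, one computes $x^\top C x \geqslant 2 \cdot \tfrac{1}{\sqrt{2}} \cdot \tfrac{1}{\sqrt{2d}} \sum_i w(u_i,v)^2 + (\text{charge and neighbour-neighbour terms})$, and the dominant term is $\tfrac{1}{\sqrt{d}}\sum_i w(u_i,v)^2 = \tfrac{d}{\sqrt{d}} = \sqrt{d}$; combined with the $v$-charge term one arranges $|x^\top C x| > 2$, contradicting that $C$ has spectrum in $[-2,2]$. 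The bookkeeping for the charge terms and the edges among the $u_i$ is routine once the test vector is fixed, though one may need to split into cases according to the signs of those extra terms (switching at $v$ lets one assume all $w(u_i,v) > 0$, simplifying this).

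The main obstacle I anticipate is the Galois-conjugate step: one needs that \emph{some} $\sigma \in \operatorname{Gal}(L_A/\Q)$ leaves the degree of $v$ at least $d$, or more honestly, one needs a conjugate for which the local trace-type quantity stays above the threshold $4$. Because each $w(u,v)^2$ is totally positive, every conjugate keeps every term positive, but the \emph{sum} could conceivably shrink below $4$ under some conjugates while the original sum exceeds $4$. The fix is to observe that $\mathfrak S_n^\prime$ is closed under applying any $\sigma$, so it suffices to find one bad conjugate; and since $\sum_u w(u,v)^2$ is a totally positive algebraic integer that is $> 4$, if it were $< 4$ under some conjugate we would still need to handle that conjugate — but actually the argument above only needs the \emph{original} matrix if we instead bound things by the original sum being $>4$ and use that the relevant $2\times 2$ and Rayleigh-quotient inequalities, once we have switched to make all weights positive, involve only real inequalities valid for the matrix $A$ itself. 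In other words, I would run the Rayleigh-quotient contradiction directly on $A$ (no conjugation needed) once I note $v$'s degree exceeds $4$; the role of $\mathfrak S_n^\prime$ rather than $\mathfrak S_n$ is only that we still get to use ``spectrum of $A$ in $[-2,2]$'' for $A$ itself, which is part of the definition. So the Galois subtlety may in fact dissolve, and the real work is just pinning down the test vector and the sign case analysis for the auxiliary terms.
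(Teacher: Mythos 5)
Your final observation that the Galois conjugation step dissolves is correct: the identity automorphism lies in $\operatorname{Gal}(L_A/\Q)$, so membership in $\mathfrak S_n^\prime$ already forces the spectrum of $A$ itself into $[-2,2]$, and no conjugate needs to be chosen. The genuine gap is in the Rayleigh-quotient step. With your test vector ($x_v = 1/\sqrt{2}$, $x_{u_i} = w(u_i,v)/\sqrt{2d}$) the cross terms contribute exactly $\sqrt{d}$, but the remaining terms --- the charge of $v$, the charges of the $u_i$, and above all the edges \emph{among} the neighbours $u_i$ --- are not controlled. When $d$ exceeds $4$ only slightly (e.g.\ $d=5$), the margin $\sqrt{d}-2\approx 0.236$ is easily swamped by these terms, which can be negative: switching at the $u_i$ lets you make all the $w(u_i,v)$ positive, but it simultaneously flips the neighbour--neighbour weights, so you cannot make those nonnegative as well. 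Hence $x^\top C x > 2$ simply need not hold for your fixed test vector, and the ``routine bookkeeping'' and ``sign case analysis'' you defer is exactly where the argument breaks; it is not clear it can be completed along these lines.

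The paper's proof sidesteps all of this with one move you came close to but did not make: instead of $\operatorname{tr}(C^2)$ (which, as you correctly note, is too weak), it uses the single diagonal entry $(A^2)_{vv} = \sum_u w(u,v)^2 = d$. Interlacing applied to the $1\times 1$ principal submatrix $(d)$ of the symmetric matrix $A^2$ gives $\lambda_{\max}(A^2) \geqslant d > 4$, and since the eigenvalues of $A^2$ are the squares of those of $A$, the spectral radius of $A$ is at least $\sqrt{d} > 2$. This makes the neighbour--neighbour edges and all charges other than $w(v,v)$ irrelevant, which is precisely the difficulty your approach runs into. I would replace the Rayleigh-quotient construction with this two-line argument.
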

\begin{proof}
	Let $A$ be an adjacency matrix of $G$ with $v$ corresponding to the first row.
	The first entry of the first row of $A^2$ is $d$.
	Therefore, by interlacing, the largest eigenvalue of $A^2$ is at least $d$, and so the largest modulus of the eigenvalues of $A$ is at least $|\sqrt{d}| > 2$.
\end{proof}

This lemma also restricts the possible entries of matrices in $\mathfrak S^\prime_n$; any entry of such a matrix must square to at most $4$.

A matrix that is equivalent to a block diagonal matrix of more than one block is called \textbf{decomposable}, otherwise it is called \textbf{indecomposable}.
A matrix $A = (a_{ij})$ is indecomposable if and only if its underlying graph (whose vertices $u$ and $v$ are adjacent if and only if $a_{uv}$ is nonzero) is connected.
The eigenvalues of a decomposable matrix are found by pooling together the eigenvalues of its blocks.
It is therefore sufficient to restrict our classification of cyclotomic matrices to indecomposable matrices.
Hence we redefine $\mathfrak S^\prime_n$ and $\mathfrak S_n$ to consist only of indecomposable matrices.
An indecomposable cyclotomic matrix that is not a principal submatrix of any other indecomposable cyclotomic matrix is called a \textbf{maximal} indecomposable cyclotomic matrix.
The corresponding graph is called a maximal connected cyclotomic graph.

We briefly return to the problem of whether or not a matrix has an integral characteristic polynomial.
It is possible to ensure the integrality of a matrix by giving its associated graph a certain symmetry.
Let $K$ be a Galois extension of $\Q$ with $R$ its ring of integers.  
We say that a Hermitian $R$-matrix is \textbf{Galois invariant} if it is strongly equivalent to itself under Galois conjugation, i.e., for all $\sigma \in \operatorname{Gal}(K/\Q)$, $G$ is strongly equivalent to $\sigma(G)$.

\begin{proposition}\label{pro:Ginvariance}
	Let $A$ be a Galois-invariant symmetric $R$-matrix.
	Then its characteristic polynomial $\chi_A$ has integer coefficients.
\end{proposition}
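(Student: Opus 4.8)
The plan is to show that every $\sigma \in \operatorname{Gal}(K/\Q)$ fixes each coefficient of $\chi_A$, so that the coefficients lie in $\Q$; since they are algebraic integers (being polynomials in the entries of $A$, which lie in the ring of integers $R$), they must then lie in $\Z$. First I would fix $\sigma \in \operatorname{Gal}(K/\Q)$ and use the hypothesis of Galois invariance: by definition there is some $Q \in O_n(\Z)$ with $\sigma(A) = Q A Q^\top$. Applying $\sigma$ componentwise to the characteristic polynomial commutes with taking determinants of matrices over $K[x]$, so
\[
	\sigma(\chi_A(x)) = \det\bigl(xI - \sigma(A)\bigr) = \det\bigl(xI - Q A Q^\top\bigr).
\]
Then I would observe that $\det(xI - QAQ^\top) = \det\bigl(Q(xI - A)Q^\top\bigr) = \det(Q)\det(xI-A)\det(Q^\top) = \det(xI - A) = \chi_A(x)$, using that $Q Q^\top = I$ forces $\det(Q)^2 = 1$. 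Hence $\sigma(\chi_A(x)) = \chi_A(x)$ for every $\sigma$, i.e., each coefficient of $\chi_A$ is fixed by the whole Galois group.

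From there the argument is the standard one already rehearsed in the introduction for the imaginary quadratic case: a coefficient of $\chi_A$ that is Galois-fixed lies in the fixed field $\Q$, and being an algebraic integer it lies in $\Z$. This completes the proof.

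The only point that needs a little care — and the closest thing to an obstacle — is making precise that applying $\sigma$ to the coefficients of $\chi_A(x)$ is the same as forming $\det(xI - \sigma(A))$; this is just functoriality of the determinant under the ring homomorphism $K[x] \to K[x]$ induced by $\sigma$ (acting on coefficients, fixing $x$), but it is worth stating. Everything else is bookkeeping: that $O_n(\Z)$-conjugation preserves the characteristic polynomial was already noted in Section~\ref{sec:useful}, and the fact that the entries of $A$ lie in $R$ guarantees integrality of the coefficients.
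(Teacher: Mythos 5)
Your proposal is correct and follows essentially the same route as the paper: apply $\sigma$ to the coefficients of $\chi_A$, use Galois invariance to see the characteristic polynomial is unchanged, conclude the coefficients are rational algebraic integers and hence in $\Z$. The only difference is that you spell out the intermediate step $\det(xI-\sigma(A)) = \det\bigl(Q(xI-A)Q^\top\bigr) = \det(xI-A)$, which the paper's proof elides by writing $\det(xI-\sigma(A)) = \det(xI-A)$ directly; your added care there is harmless and arguably an improvement.
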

\begin{proof} 
	For all $\sigma \in \operatorname{Gal}(K/\Q)$, applying $\sigma$ to the coefficients of $\chi_A$ gives
	\[ 
	\sigma (\chi_A(x)) = \det( xI - \sigma(A) ) = \det( xI - A ) = \chi_A(x).
	\] 
	Hence, the characteristic polynomial $\chi_A$ must have rational coefficients. 
	And since the entries of $A$ are algebraic integers, so too are the coefficients of $\chi_A$.
\end{proof}

We observed earlier that all Hermitian matrices over imaginary quadratic integer rings are Galois invariant. 
It can be readily seen below, in the classification of $\mathcal R$-matrices, that the converse of the proposition does not hold.
For example, the maximal cyclotomic $\mathcal R$-graph $S_4^{(2,\varphi)}$ of Figure~\ref{fig:maxcycs8} is not Galois invariant, in fact, it is the only such example; all other maximal cyclotomic $\mathcal R$-graphs are Galois invariant.

%

\section{Results}
\label{sec:results}

Before stating our results, we outline our graph drawing conventions.
We draw edges with edge-weight $w$ as \tikz { \path[pedge] (0,0) -- node[weight] {$w$} (1.2,0); } and edges of weight $-w$ as \tikz { \path[nedge] (0,0) -- node[weight] {$w$} (1.2,0); }.
If $w=1$, we simply draw a solid line \tikz[auto] {\path[pedge] (0,0) -- (1.2,0); } and a dashed line \tikz {\path[nedge] (0,0) -- (1.2,0); } respectively.
A vertex with charge $c$ for some $c > 0$ is drawn as \tikz {\node[pc] {$c$};} and a vertex with charge $-c$ is drawn as \tikz {\node[nc] {$c$};}.
And if a vertex is uncharged, we simply draw \tikz {\node[zc] {};}.
By a \textbf{subgraph} $H$ of $G$ we mean an induced subgraph: a subgraph obtained by deleting vertices and their incident edges.
We say that $G$ \textbf{contains} $H$ and that $G$ is a \textbf{supergraph} of $H$.
A graph is called \textbf{charged} if it contains at least one charged vertex, otherwise it is called \textbf{uncharged}.

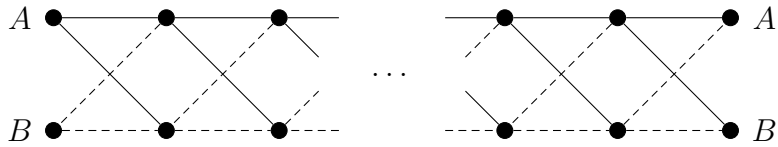
\begin{figure}[htbp]
	\centering
		\begin{tikzpicture}[auto, scale=1.5]
			\begin{scope}
				\foreach \type/\pos/\name in {{vertex/(0,0)/a2}, {vertex/(0,1)/a1}, {vertex/(1,1)/b1}, {vertex/(1,0)/b2}, {vertex/(2,0)/e2}, {vertex/(2,1)/e1}, {empty/(2.6,1)/b11}, {empty/(2.6,0)/b21}, {empty/(2.4,0.6)/b12}, {empty/(2.4,0.4)/b22}, {empty/(3.4,1)/c11}, {empty/(3.4,0)/c21}, {empty/(3.6,0.6)/c12}, {empty/(3.6,0.4)/c22}, {vertex/(4,1)/c1}, {vertex/(4,0)/c2}, {vertex/(5,1)/d1}, {vertex/(5,0)/d2}, {vertex/(6,1)/f1}, {vertex/(6,0)/f2}}
					\node[\type] (\name) at \pos {};
				\foreach \pos/\name in {{(3,0.5)/\dots}, {(-.3,1)/A}, {(-0.3,0)/B}, {(6.3,1)/A}, {(6.3,0)/B}}
					\node at \pos {$\name$};
				\foreach \edgetype/\source/ \dest/\weight in {nedge/b1/a2/{}, pedge/a1/b1/{}, pedge/a1/b2/{}, nedge/a2/b2/{}, nedge/e1/b2/{}, pedge/b1/e1/{}, pedge/b1/e2/{}, nedge/b2/e2/{}, nedge/b21/e2/{}, pedge/e1/b11/{}, pedge/e1/b12/{}, nedge/e2/b22/{}, pedge/c11/c1/{}, nedge/c12/c1/{}, pedge/c22/c2/{}, nedge/c21/c2/{}, nedge/d1/c2/{}, pedge/c1/d1/{}, pedge/c1/d2/{}, nedge/c2/d2/{}, pedge/d1/f1/{}, nedge/d2/f2/{}}
					\path[\edgetype] (\source) -- (\dest);
			\end{scope}
			\begin{scope}
				\foreach \edgetype/\source/ \dest/\weight in {nedge/d2/f1/{}, pedge/d1/f2/{}}
					\path[\edgetype] (\source) -- (\dest);
			\end{scope}
		\end{tikzpicture}
	\caption{The family $T_{2k}$ of $2k$-vertex maximal connected cyclotomic $\Z$-graphs, for $k \geqslant 3$. (The two copies of vertices $A$ and $B$ should be identified to give a toral tessellation.) }
	\label{fig:maxcycs1}
\end{figure}

\begin{figure}[htbp]
	\centering
		\begin{tikzpicture}[scale=1.5]
			\begin{scope}
				\foreach \type/\pos/\name in {{vertex/(1,1)/b1}, {vertex/(1,0)/b2}, {vertex/(2,0)/e2}, {vertex/(2,1)/e1}, {empty/(2.6,1)/b11}, {empty/(2.6,0)/b21}, {empty/(2.4,0.6)/b12}, {empty/(2.4,0.4)/b22}, {empty/(3.4,1)/c11}, {empty/(3.4,0)/c21}, {empty/(3.6,0.6)/c12}, {empty/(3.6,0.4)/c22}, {vertex/(4,1)/c1}, {vertex/(4,0)/c2}, {vertex/(5,1)/d1}, {vertex/(5,0)/d2}}
					\node[\type] (\name) at \pos {};
				\foreach \type/\pos/\name in {{vertex/(0,0.5)/bgn}, {vertex/(6,0.5)/end}}
					\node[\type] (\name) at \pos {};
				\foreach \pos/\name in {{(3,0.5)/\dots}}
					\node at \pos {\name};
				\foreach \edgetype/\source/ \dest in {nedge/e1/b2, pedge/b1/e1, pedge/b1/e2, nedge/b2/e2, nedge/b21/e2, pedge/e1/b11, pedge/e1/b12, nedge/e2/b22, pedge/c11/c1, nedge/c12/c1, pedge/c22/c2, nedge/c21/c2, nedge/d1/c2, pedge/c1/d1, pedge/c1/d2, nedge/c2/d2}
					\path[\edgetype] (\source) -- (\dest);
				\foreach \edgetype/\source/\dest in {pedge/b1/bgn, pedge/b2/bgn, nedge/d2/end, pedge/d1/end}
					\path[\edgetype] (\source) -- node[weight] {$\sqrt{2}$} (\dest);
			\end{scope}
		\end{tikzpicture}
	\caption{The family of $2k$-vertex maximal connected cyclotomic $\Z[\sqrt{2}]$-graphs $C_{2k}$ for $k \geqslant 2$.}
	\label{fig:maxcycs2}
\end{figure}

\begin{figure}[htbp]
	\centering
		\begin{tikzpicture}[scale=1.5, auto]
				\foreach \type/\pos/\name in {{vertex/(1,1)/b1}, {vertex/(1,0)/b2}, {vertex/(2,0)/e2}, {vertex/(2,1)/e1}, {empty/(2.6,1)/b11}, {empty/(2.6,0)/b21}, {empty/(2.4,0.6)/b12}, {empty/(2.4,0.4)/b22}, {empty/(3.4,1)/c11}, {empty/(3.4,0)/c21}, {empty/(3.6,0.6)/c12}, {empty/(3.6,0.4)/c22}, {vertex/(4,1)/c1}, {vertex/(4,0)/c2}, {vertex/(5,1)/d1}, {vertex/(5,0)/d2}}
					\node[\type] (\name) at \pos {};
				\foreach \type/\pos/\name in {{pc/(0,0)/a2}, {pc/(0,1)/a1}, {pc/(6,1)/f1}, {pc/(6,0)/f2}}
					\node[\type] (\name) at \pos {$1$};
				\foreach \pos/\name in {{(3,0.5)/\dots}}
					\node at \pos {$\name$};
				\foreach \edgetype/\source/ \dest in {pedge/a1/a2, nedge/b1/a2, pedge/a1/b1, pedge/a1/b2, nedge/a2/b2, nedge/e1/b2, pedge/b1/e1, pedge/b1/e2, nedge/b2/e2, nedge/b21/e2, pedge/e1/b11, pedge/e1/b12, nedge/e2/b22, pedge/c11/c1, nedge/c12/c1, pedge/c22/c2, nedge/c21/c2, nedge/d1/c2, pedge/c1/d1, pedge/c1/d2, nedge/c2/d2, nedge/f1/d2, pedge/d1/f1, pedge/d1/f2, nedge/d2/f2, nedge/f1/f2}
					\path[\edgetype] (\source) -- (\dest);
				\node at (3,-0.2) {};
			\end{tikzpicture}
			\begin{tikzpicture}[scale=1.5, auto]
				\foreach \type/\pos/\name in {{vertex/(1,1)/b1}, {vertex/(1,0)/b2}, {vertex/(2,0)/e2}, {vertex/(2,1)/e1}, {empty/(2.6,1)/b11}, {empty/(2.6,0)/b21}, {empty/(2.4,0.6)/b12}, {empty/(2.4,0.4)/b22}, {empty/(3.4,1)/c11}, {empty/(3.4,0)/c21}, {empty/(3.6,0.6)/c12}, {empty/(3.6,0.4)/c22}, {vertex/(4,1)/c1}, {vertex/(4,0)/c2}, {vertex/(5,1)/d1}, {vertex/(5,0)/d2}}
					\node[\type] (\name) at \pos {};
				\foreach \type/\pos/\name in {{pc/(0,0)/a2}, {pc/(0,1)/a1}, {nc/(6,1)/f1}, {nc/(6,0)/f2}}
					\node[\type] (\name) at \pos {$1$};
				\foreach \pos/\name in {{(3,0.5)/\dots}}
					\node at \pos {$\name$};
				\foreach \edgetype/\source/ \dest in {pedge/a1/a2, nedge/b1/a2, pedge/a1/b1, pedge/a1/b2, nedge/a2/b2, nedge/e1/b2, pedge/b1/e1, pedge/b1/e2, nedge/b2/e2, nedge/b21/e2, pedge/e1/b11, pedge/e1/b12, nedge/e2/b22, pedge/c11/c1, nedge/c12/c1, pedge/c22/c2, nedge/c21/c2, nedge/d1/c2, pedge/c1/d1, pedge/c1/d2, nedge/c2/d2, nedge/f1/d2, pedge/d1/f1, pedge/d1/f2, nedge/d2/f2, pedge/f1/f2}
					\path[\edgetype] (\source) -- (\dest);
		\end{tikzpicture}
	\caption{The families of $2k$-vertex maximal connected cyclotomic $\Z$-graphs $C_{2k}^{++}$ and $C_{2k}^{+-}$ for $k \geqslant 2$.}
	\label{fig:maxcycs3}
\end{figure}
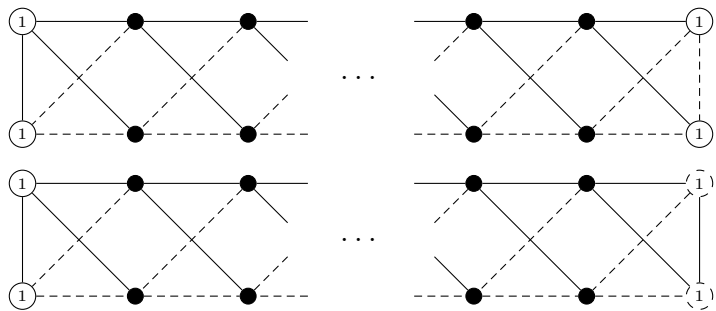

\begin{figure}[htbp]
	\centering
		\begin{tikzpicture}[scale=1.5]
			\begin{scope}
				\foreach \type/\pos/\name in {{vertex/(0,0.5)/bgn}, {vertex/(1,1)/b1}, {vertex/(1,0)/b2}, {vertex/(2,0)/e2}, {vertex/(2,1)/e1}, {empty/(2.6,1)/b11}, {empty/(2.6,0)/b21}, {empty/(2.4,0.6)/b12}, {empty/(2.4,0.4)/b22}, {empty/(3.4,1)/c11}, {empty/(3.4,0)/c21}, {empty/(3.6,0.6)/c12}, {empty/(3.6,0.4)/c22}, {vertex/(4,1)/c1}, {vertex/(4,0)/c2}, {vertex/(5,1)/d1}, {vertex/(5,0)/d2}}
					\node[\type] (\name) at \pos {};
				\foreach \type/\pos/\name in {{pc/(6,1)/f1}, {pc/(6,0)/f2}}
					\node[\type] (\name) at \pos {$1$};
				\foreach \pos/\name in {{(3,0.5)/\dots}}
					\node at \pos {$\name$};
				\foreach \edgetype/\source/ \dest in {nedge/e1/b2, pedge/b1/e1, pedge/b1/e2, nedge/b2/e2, nedge/b21/e2, pedge/e1/b11, pedge/e1/b12, nedge/e2/b22, pedge/c11/c1, nedge/c12/c1, pedge/c22/c2, nedge/c21/c2, nedge/d1/c2, pedge/c1/d1, pedge/c1/d2, nedge/c2/d2, nedge/f1/d2, pedge/d1/f1, pedge/d1/f2, nedge/d2/f2, nedge/f1/f2}
					\path[\edgetype] (\source) -- (\dest);
				\foreach \edgetype/\source/\dest in {pedge/b1/bgn, pedge/b2/bgn}
					\path[\edgetype] (\source) -- node[weight] {$\sqrt{2}$} (\dest);
			\end{scope}
		\end{tikzpicture}
	\caption{The family of $(2k+1)$-vertex maximal connected cyclotomic $\Z[\sqrt{2}]$-graphs $C_{2k+1}$ for $k \geqslant 1$.}
	\label{fig:maxcycs4}
\end{figure}

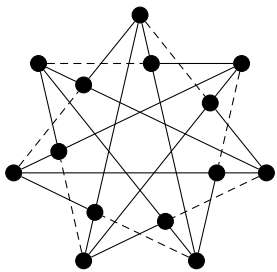
\begin{figure}[htbp]
	\centering
		\begin{tikzpicture}
			\begin{scope}[auto, scale=1.5]
				\foreach \type/\pos/\name in {{vertex/(0,0)/a}, {vertex/(1,0)/b}, {vertex/(1.62,0.78)/c}, {vertex/(1.4,1.75)/d}, {vertex/(0.5,2.18)/e}, {vertex/(-0.4,1.75)/f}, {vertex/(-0.62,0.78)/g}, {vertex/(0.1,0.43)/t}, {vertex/(0.725,0.35)/u}, {vertex/(1.18,0.78)/v}, {vertex/(1.122,1.4)/w}, {vertex/(0.6,1.75)/x}, {vertex/(0,1.56)/y}, {vertex/(-0.22,0.97)/z}}
					\node[\type] (\name) at \pos {};
				\foreach \edgetype/\source/ \dest in {pedge/a/e, nedge/a/z, pedge/a/d, pedge/a/u, nedge/b/t, pedge/b/f, pedge/b/e, pedge/b/v, nedge/c/u, pedge/c/g, pedge/c/f, pedge/c/w, nedge/d/v, pedge/d/g, pedge/d/x, nedge/e/w, pedge/e/y, nedge/f/x, pedge/f/z, nedge/g/y, pedge/g/t}
					\path[\edgetype] (\source) -- (\dest);
			\end{scope}
		\end{tikzpicture}
 	\caption{The sporadic maximal connected cyclotomic $\Z$-graph $S_{14}$ of order $14$.}
	\label{fig:maxcycs5}
\end{figure}

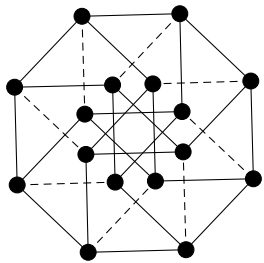
\begin{figure}[htbp]
	\centering
	\begin{tikzpicture}
		\newdimen\rad
		\rad=1.7cm
		\newdimen\radi
		\radi=0.7cm
		\foreach \x in {69,114,159,204,249,294,339,384}
		{
			\draw (\x:\radi) node[vertex] {};
			\draw[pedge] (\x:\radi) -- (\x+135:\radi);
	    }
		\foreach \x in {69,159,249,339}
		{
	    	\draw (\x:\rad) node[vertex] {};
			\draw[pedge] (\x:\rad) -- (\x+45:\rad);
			\draw[nedge] (\x:\rad) -- (\x+45:\radi);
			\draw[pedge] (\x:\rad) -- (\x-45:\radi);
	    }
		\foreach \x in {114,204,294,384}
		{
	    	\draw (\x:\rad) node[vertex] {};
			\draw[pedge] (\x:\rad) -- (\x+45:\rad);
			\draw[nedge] (\x:\rad) -- (\x+45:\radi);
			\draw[pedge] (\x:\rad) -- (\x-45:\radi);
	    }
	\end{tikzpicture}
	\caption{The sporadic maximal connected cyclotomic $\Z$-hypercube $S_{16}$.}
	\label{fig:maxcycs6}
\end{figure}

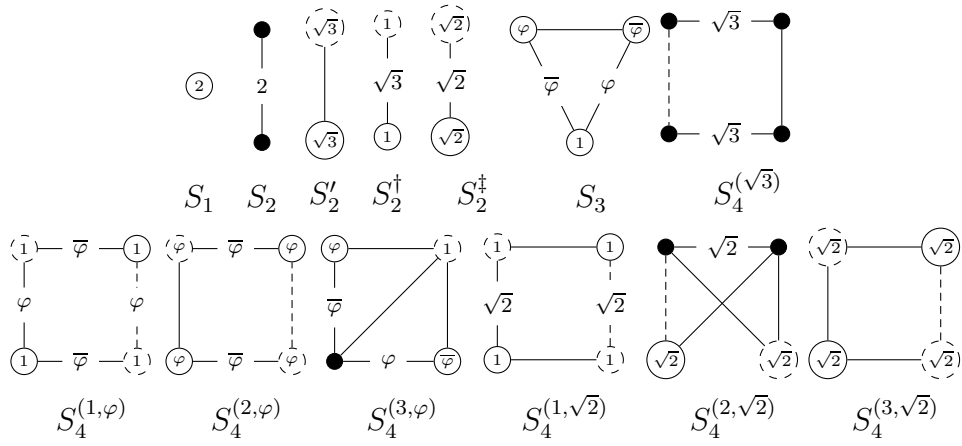
\begin{figure}[htbp]
	\centering
	\begin{tikzpicture}[scale=1.5, auto]
	\begin{scope}	
		\foreach \pos/\name/\type/\charge in {{(0,0.4)/a/pc/{2}}}
			\node[\type] (\name) at \pos {$\charge$}; 
		\node at (0,-0.6) {$S_1$};
	\end{scope}
	\end{tikzpicture}
	\begin{tikzpicture}[scale=1.5]
	\begin{scope}	
		\foreach \pos/\name/\type/\charge in {{(0,0)/a/zc/{}}, {(0,1)/b/zc/{}}}
			\node[\type] (\name) at \pos {$\charge$}; 
		\foreach \edgetype/\source/ \dest in {pedge/a/b}
		\path[\edgetype] (\source) -- node[weight] {$2$} (\dest);
		\node at (0,-0.5) {$S_2$};
	\end{scope}
	\end{tikzpicture}
	\begin{tikzpicture}[scale=1.5]
		\foreach \pos/\name/\sign/\charge in {{(0,0)/a/pc/\sqrt{3}}, {(0,1)/b/nc/\sqrt{3}}}
			\node[\sign] (\name) at \pos {$\charge$}; 
		\node at (0,-0.5) {$S_2^\prime$};
		\foreach \edgetype/\source/ \dest /\weight in {pedge/b/a/{}}
		\path[\edgetype] (\source) -- (\dest);
	\end{tikzpicture}
	\begin{tikzpicture}[scale=1.5]
		\foreach \pos/\name/\sign/\charge in {{(0,0)/a/pc/1}, {(0,1)/b/nc/1}}
			\node[\sign] (\name) at \pos {$\charge$}; 
		\node at (0,-0.5) {$S_2^{\dag}$};
		\foreach \edgetype/\source/ \dest /\weight in {pedge/b/a/{\sqrt{3}}}
		\path[\edgetype] (\source) -- node[weight] {$\weight$} (\dest);
	\end{tikzpicture}
	\begin{tikzpicture}[scale=1.5]	
		\foreach \pos/\name/\sign/\charge in {{(0,0)/a/pc/\sqrt{2}}, {(0,1)/b/nc/\sqrt{2}}}
			\node[\sign] (\name) at \pos {$\charge$}; 
		\node at (0.2,-0.5) {$S_2^\ddag$};
		\foreach \edgetype/\source/ \dest /\weight in {pedge/b/a/\sqrt{2}}
		\path[\edgetype] (\source) -- node[weight] {$\weight$} (\dest);
	\end{tikzpicture}
	\begin{tikzpicture}[scale=1.5]
			\foreach \pos/\name/\sign/\charge in {{(0,1)/a/pc/\varphi}, {(1,1)/b/pc/\bar\varphi}, {(0.5,0)/c/pc/1}}
				\node[\sign] (\name) at \pos {$\charge$}; 
			\node at (0.6,-0.5) {$S_3$};
			\foreach \edgetype/\source/ \dest /\weight in {pedge/a/c/\bar\varphi, pedge/c/b/\varphi}
			\path[\edgetype] (\source) -- node[weight] {$\weight$} (\dest);
			\foreach \edgetype/\source/ \dest /\weight in {pedge/b/a/{}}
			\path[\edgetype] (\source) -- (\dest);
		\end{tikzpicture}
		\begin{tikzpicture}[scale=1.5]
			\foreach \pos/\name in {{(0,0)/a}, {(1,0)/b}, {(0,1)/c}, {(1,1)/d}}
				\node[vertex] (\name) at \pos {};
			\node at (0.7,-0.5) {$S_4^{(\sqrt{3})}$};
			\foreach \edgetype/\source/ \dest /\weight in {pedge/a/b/\sqrt{3}, pedge/d/c/\sqrt{3}}
				\path[\edgetype] (\source) -- node[weight] {$\weight$} (\dest);
				\foreach \edgetype/\source/ \dest /\weight in {pedge/b/d/{}, nedge/c/a/{}}
					\path[\edgetype] (\source) -- (\dest);
		\end{tikzpicture}
		
		\begin{tikzpicture}[scale=1.5]
			\foreach \pos/\name/\sign/\charge in {{(0,0)/a/pc/1}, {(0,1)/b/nc/1}, {(1,0)/c/nc/1}, {(1,1)/d/pc/1}}
				\node[\sign] (\name) at \pos {$\charge$}; 
			\node at (0.6,-0.5) {$S_4^{(1, \varphi)}$};
			\foreach \edgetype/\source/ \dest /\weight in {pedge/b/a/\varphi, pedge/a/c/\bar\varphi, pedge/d/b/\bar\varphi, nedge/c/d/\varphi}
			\path[\edgetype] (\source) -- node[weight] {$\weight$} (\dest);
		\end{tikzpicture}
		\begin{tikzpicture}[scale=1.5]
			\foreach \pos/\name/\sign/\charge in {{(0,0)/a/pc/\varphi}, {(0,1)/b/nc/\varphi}, {(1,0)/c/nc/\varphi}, {(1,1)/d/pc/\varphi}}
				\node[\sign] (\name) at \pos {$\charge$}; 
			\node at (0.6,-0.5) {$S_4^{(2, \varphi)}$};
			\foreach \edgetype/\source/ \dest /\weight in {pedge/a/c/\bar\varphi, pedge/d/b/\bar\varphi}
			\path[\edgetype] (\source) -- node[weight] {$\weight$} (\dest);
			\foreach \edgetype/\source/ \dest /\weight in {pedge/b/a/{}, nedge/c/d/{}}
			\path[\edgetype] (\source) -- (\dest);
		\end{tikzpicture}
		\begin{tikzpicture}[scale=1.5]
			\foreach \pos/\name/\sign/\charge in {{(0,0)/a/zc/{}}, {(0,1)/b/pc/\varphi}, {(1,0)/c/pc/\bar\varphi}, {(1,1)/d/nc/1}}
				\node[\sign] (\name) at \pos {$\charge$}; 
			\node at (0.6,-0.5) {$S_4^{(3, \varphi)}$};
			\foreach \edgetype/\source/ \dest /\weight in {pedge/b/a/\bar\varphi, pedge/a/c/\varphi}
			\path[\edgetype] (\source) -- node[weight] {$\weight$} (\dest);
			\foreach \edgetype/\source/ \dest /\weight in {pedge/d/b/{}, pedge/c/d/{}, pedge/a/d/{}}
			\path[\edgetype] (\source) -- (\dest);
		\end{tikzpicture}
		\begin{tikzpicture}[scale=1.5]
		\foreach \pos/\name/\sign/\charge in {{(0,0)/a/pc/1}, {(0,1)/b/nc/1}, {(1,0)/c/nc/1}, {(1,1)/d/pc/1}}
			\node[\sign] (\name) at \pos {$\charge$}; 
		\node at (0.6,-0.5) {$S_4^{(1, \sqrt{2})}$};
		\foreach \edgetype/\source/ \dest /\weight in {pedge/b/a/\sqrt{2}, nedge/c/d/\sqrt{2}}
		\path[\edgetype] (\source) -- node[weight] {$\weight$} (\dest);
		\foreach \edgetype/\source/ \dest /\weight in {pedge/d/b/{}, pedge/a/c/{}}
			\path[\edgetype] (\source) -- (\dest);
	\end{tikzpicture}
	\begin{tikzpicture}[scale=1.5]
		\foreach \pos/\name/\sign/\charge in {{(0,0)/a/pc/\sqrt{2}}, {(0,1)/b/zc/{}}, {(1,0)/c/nc/\sqrt{2}}, {(1,1)/d/zc/{}}}
			\node[\sign] (\name) at \pos {$\charge$}; 
		\node at (0.6,-0.5) {$S_4^{(2, \sqrt{2})}$};
		\foreach \edgetype/\source/ \dest /\weight in {pedge/d/b/\sqrt{2}}
		\path[\edgetype] (\source) -- node[weight] {$\weight$} (\dest);
		\foreach \edgetype/\source/ \dest /\weight in {nedge/b/a/{}, pedge/c/d/{}, pedge/c/b/{}, pedge/a/d/{}}
		\path[\edgetype] (\source) -- (\dest);
	\end{tikzpicture}
	\begin{tikzpicture}[scale=1.5]	
		\foreach \pos/\name/\sign/\charge in {{(0,0)/a/pc/\sqrt{2}}, {(0,1)/b/nc/\sqrt{2}}, {(1,0)/c/nc/\sqrt{2}}, {(1,1)/d/pc/\sqrt{2}}}
			\node[\sign] (\name) at \pos {$\charge$}; 
		\node at (0.6,-0.5) {$S_4^{(3, \sqrt{2})}$};
		\foreach \edgetype/\source/ \dest /\weight in {pedge/b/a/{}, nedge/c/d/{}, pedge/d/b/{}, pedge/a/c/{}}
		\path[\edgetype] (\source) -- (\dest);
	\end{tikzpicture}
	\caption{The sporadic maximal connected cyclotomic $\mathcal R$-graphs of orders $1$, $2$, $3$ and $4$.}
	\label{fig:maxcycs8}
\end{figure}

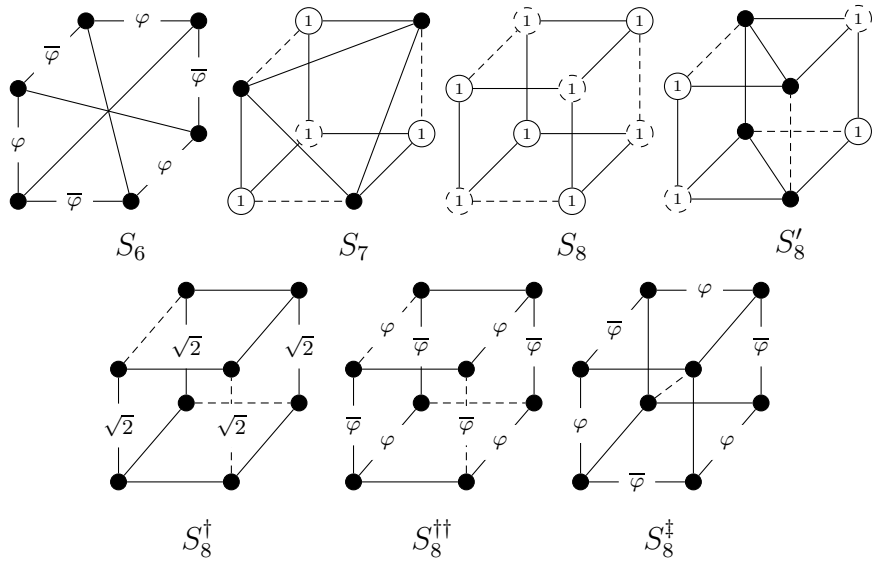
\begin{figure}[htbp]
	\centering
	\begin{tikzpicture}[scale=1.5]
			\def\XthreeDadj{0.6}
			\def\YthreeDadj{0.6}
		\foreach \pos/\name in {{(0,0)/a}, {(0,1)/b}, {(1 + \XthreeDadj,\YthreeDadj)/c}, {(1,0)/d}, {(\XthreeDadj,1+\YthreeDadj)/e}, {(1+\XthreeDadj,1+\YthreeDadj)/f}}
			\node[vertex] (\name) at \pos {};
		\node at (1,-0.4) {$S_6$};
		\foreach \edgetype/\source/ \dest /\weight in {pedge/a/b/\varphi, pedge/b/e/\bar\varphi, pedge/d/a/\bar\varphi, pedge/c/d/\varphi, pedge/f/c/\bar\varphi, pedge/e/f/\varphi}
		\path[\edgetype] (\source) -- node[weight] {$\weight$} (\dest);
		\foreach \edgetype/\source/ \dest in {pedge/b/c, pedge/d/e, pedge/f/a}
		\path[\edgetype] (\source) -- (\dest);
	\end{tikzpicture}
	\begin{tikzpicture}[scale=1.5]
		\def\XthreeDadj{0.6}
		\def\YthreeDadj{0.6}
			\foreach \pos/\name/\sign/\charge in {{(0,0)/a/pc/1}, {(0,1)/b/zc/{}}, {(1,0)/c/zc/{}}, {(0 + \XthreeDadj,0 + \YthreeDadj)/d/nc/1}, {(0 + \XthreeDadj,1 + \YthreeDadj)/e/pc/1}, {(1 + \XthreeDadj,0 + \YthreeDadj)/f/pc/1}, {(1 + \XthreeDadj,1 + \YthreeDadj)/g/zc/{}}}
				\node[\sign] (\name) at \pos {$\charge$}; 
			\foreach \edgetype/\source/ \dest in {pedge/b/a, nedge/a/c, pedge/a/d, pedge/c/f, nedge/g/f, pedge/b/g, pedge/d/e, pedge/d/f, nedge/b/e, pedge/c/g, pedge/e/g, pedge/b/c}
			\path[\edgetype] (\source) -- (\dest);
		\node at (1,-0.4) {$S_{7}$};
		\end{tikzpicture}
		\begin{tikzpicture}[scale=1.5]
			\def\XthreeDadj{0.6}
			\def\YthreeDadj{0.6}
		\begin{scope}
			\foreach \pos/\name/\sign/\charge in {{(0,0)/a/nc/1}, {(0,1)/b/pc/1}, {(1,0)/c/pc/1}, {(1,1)/d/nc/1}, {(0 + \XthreeDadj,0 + \YthreeDadj)/e/pc/1}, {(0 + \XthreeDadj,1 + \YthreeDadj)/f/nc/1}, {(1 + \XthreeDadj,0 + \YthreeDadj)/g/nc/1}, {(1 + \XthreeDadj,1 + \YthreeDadj)/h/pc/1}}
				\node[\sign] (\name) at \pos {$\charge$}; 
			\foreach \edgetype/\source/ \dest in {pedge/b/a, nedge/a/c, pedge/a/e, pedge/c/g, pedge/c/d, nedge/b/f, pedge/b/d, pedge/e/f, pedge/e/g, nedge/h/g, pedge/f/h, pedge/d/h}
			\path[\edgetype] (\source) -- (\dest);
		\node at (1,-0.4) {$S_{8}$};
		\end{scope}
		\end{tikzpicture}
		\begin{tikzpicture}[scale=1.5]
			\def\XthreeDadj{0.6}
			\def\YthreeDadj{0.6}
		\begin{scope}	
			\foreach \pos/\name/\sign/\charge in {{(0,0)/a/nc/1}, {(0,1)/b/pc/1}, {(1,0)/c/zc/{}}, {(1,1)/d/zc/{}}, {(0 + \XthreeDadj,0 + \YthreeDadj)/e/zc/{}}, {(0 + \XthreeDadj,1 + \YthreeDadj)/f/zc/{}}, {(1 + \XthreeDadj,0 + \YthreeDadj)/g/pc/1}, {(1 + \XthreeDadj,1 + \YthreeDadj)/h/nc/1}}
				\node[\sign] (\name) at \pos {$\charge$}; 
			\foreach \edgetype/\source/ \dest in {pedge/b/a, pedge/a/c, pedge/a/e, pedge/c/g, nedge/c/d, nedge/b/f, pedge/b/d, pedge/e/f, nedge/e/g, pedge/h/g, pedge/f/h, pedge/d/h, pedge/d/f, pedge/c/e}
			\path[\edgetype] (\source) -- (\dest);
		\node at (1,-0.4) {$S^\prime_{8}$};
		\end{scope}
	\end{tikzpicture}
	
	\begin{tikzpicture}[scale=1.5]
		\def\XthreeDadj{0.6}
		\def\YthreeDadj{0.7}
		\foreach \pos/\name in {{(0,0)/a}, {(0,1)/b}, {(1,0)/c}, {(1,1)/d}, {(0 + \XthreeDadj,0 + \YthreeDadj)/e}, {(0 + \XthreeDadj,1 + \YthreeDadj)/f}, {(1 + \XthreeDadj,0 + \YthreeDadj)/g}, {(1 + \XthreeDadj,1 + \YthreeDadj)/h}}
			\node[vertex] (\name) at \pos {}; 
		\node at (0.7,-0.5) {$S_8^\dag$};
		\foreach \edgetype/\source/ \dest /\weight in {pedge/b/a/\sqrt{2}, pedge/f/e/\sqrt{2}, nedge/c/d/\sqrt{2}, pedge/g/h/\sqrt{2}}
		\path[\edgetype] (\source) -- node[weight] {$\weight$} (\dest);
		\foreach \edgetype/\source/ \dest /\weight in {pedge/d/b/{}, nedge/f/b/{}, nedge/e/g/{}, pedge/c/g/{}, pedge/f/h/{}, pedge/a/e/{}, pedge/d/h/{}, pedge/a/c/{}}
		\path[\edgetype] (\source) -- (\dest);
	\end{tikzpicture}
	\begin{tikzpicture}[scale=1.5]
		\def\XthreeDadj{0.6}
		\def\YthreeDadj{0.7}
		\foreach \pos/\name in {{(0,0)/a}, {(0,1)/b}, {(1,0)/c}, {(1,1)/d}, {(0 + \XthreeDadj,0 + \YthreeDadj)/e}, {(0 + \XthreeDadj,1 + \YthreeDadj)/f}, {(1 + \XthreeDadj,0 + \YthreeDadj)/g}, {(1 + \XthreeDadj,1 + \YthreeDadj)/h}}
			\node[vertex] (\name) at \pos {};
		\node at (0.7,-0.5) {$S_8^{\dag\dag}$};
		\foreach \edgetype/\source/ \dest /\weight in {pedge/b/a/\bar\varphi, pedge/f/e/\bar\varphi, nedge/c/d/\bar\varphi, nedge/f/b/\varphi, pedge/c/g/\varphi, pedge/g/h/\bar\varphi, pedge/e/a/\varphi, pedge/d/h/\varphi}
		\path[\edgetype] (\source) -- node[weight] {$\weight$} (\dest);
		\foreach \edgetype/\source/ \dest /\weight in {pedge/d/b/{}, nedge/e/g/{}, pedge/f/h/{}, pedge/a/c/{}}
		\path[\edgetype] (\source) -- (\dest);
	\end{tikzpicture}
	\begin{tikzpicture}[scale=1.5]
		\def\XthreeDadj{0.6}
		\def\YthreeDadj{0.7}
		\foreach \pos/\name in {{(0,0)/a}, {(0,1)/b}, {(1,0)/c}, {(1,1)/d}, {(0 + \XthreeDadj,0 + \YthreeDadj)/e}, {(0 + \XthreeDadj,1 + \YthreeDadj)/f}, {(1 + \XthreeDadj,0 + \YthreeDadj)/g}, {(1 + \XthreeDadj,1 + \YthreeDadj)/h}}
			\node[vertex] (\name) at \pos {};
		\node at (0.7,-0.5) {$S_8^\ddag$};
		\foreach \edgetype/\source/ \dest /\weight in {pedge/b/a/\varphi, pedge/f/b/\bar\varphi, pedge/c/g/\varphi, pedge/g/h/\bar\varphi, pedge/h/f/\varphi, pedge/a/c/\bar\varphi}
		\path[\edgetype] (\source) -- node[weight] {$\weight$} (\dest);
		\foreach \edgetype/\source/ \dest /\weight in {pedge/e/f/{}, pedge/d/c/{}, pedge/d/b/{}, pedge/e/g/{}, nedge/e/d/{}, pedge/a/e/{}, pedge/d/h/{}}
		\path[\edgetype] (\source) -- (\dest);
	\end{tikzpicture}
	\caption{The sporadic maximal connected cyclotomic $\mathcal R$-graphs of orders $6$, $7$, and $8$.}
	\label{fig:maxcycs10}
\end{figure}

\begin{theorem}\cite{McKee:IntSymCyc07}\label{thm:ISM}
	Let $A$ be a maximal indecomposable cyclotomic matrix over the ring $\Z$.
	Then $A$ is equivalent to an adjacency matrix of one of the graphs $T_{2k}$ (for $k >2$), $C^{++}_{2k}$ (for $k >1$), $C^{+-}_{2k}$ (for $k >1$), $S_1$, $S_2$, $S_7$, $S_8$, $S_8^\prime$, $S_{14}$, and $S_{16}$ in Figures~\ref{fig:maxcycs1}, \ref{fig:maxcycs3}, \ref{fig:maxcycs5}, \ref{fig:maxcycs6}, \ref{fig:maxcycs8}, and \ref{fig:maxcycs10}.
	
	Moreover, every indecomposable cyclotomic $\Z$-matrix is contained in a maximal one.
\end{theorem}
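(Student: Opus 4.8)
The final statement is, up to the translation into graph language set up above, the theorem of McKee and Smyth~\cite{McKee:IntSymCyc07}; the plan is to reconstruct their argument from the positive-semidefinite reformulation of the cyclotomic condition, interlacing (Theorem~\ref{thm:interlacing}), and the degree bound of Lemma~\ref{lem:maxDeg4}. Note first that a symmetric integer matrix $A$ is cyclotomic exactly when both $2I+A$ and $2I-A$ are positive semidefinite. By Lemma~\ref{lem:maxDeg4} every vertex of the underlying graph has degree at most $4$, so over $\Z$ every off-diagonal entry and every charge lies in $\{0,\pm1,\pm2\}$; moreover a vertex carrying a weight-$\pm2$ edge or a charge $\pm2$ already has degree $4$ and is therefore isolated from the rest of the graph. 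This peels off two of the claimed maximal graphs outright: the single charge-$2$ vertex $S_1$ and the single weight-$2$ edge $S_2$. It remains to classify connected graphs all of whose edge-weights are $\pm1$ and all of whose charges lie in $\{0,\pm1\}$, and to show that every connected cyclotomic $\Z$-graph sits inside one of the maximal ones.

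For uncharged such graphs, set $M=2I+A$: a positive-semidefinite symmetric integer matrix with every diagonal entry $2$ and every off-diagonal entry in $\{0,\pm1\}$. Modulo its radical, $M$ is then the Gram matrix of a set of norm-$2$ vectors with pairwise inner products in $\{0,\pm1\}$, which generate a root lattice; by the classification of root lattices and the line-system analysis of Cameron, Goethals, Seidel and Shult~\cite{Cam:LineSystems76}, an indecomposable configuration of this kind embeds in a root system of type $A_n$, $D_n$, $E_6$, $E_7$, $E_8$, and the same applies to $2I-A$. The configurations arising from $E_6,E_7,E_8$ (and their affine counterparts) form a finite list that can be searched directly --- the search terminates because, by interlacing, a cyclotomic matrix admits only finitely many one-vertex extensions --- and from it one extracts the sporadic maximal graphs $S_{14}$ and $S_{16}$, with Smith's graphs $\tilde E_6,\tilde E_7,\tilde E_8$ and all smaller exceptional cyclotomic $\{0,1\}$-graphs appearing as induced subgraphs. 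The configurations confined to an $A_n$ or $D_n$ root system translate, after switching, into signed graphs whose only avenues for growth are cyclic or ladder-like; extending maximally in those directions one obtains precisely the toral tessellations $T_{2k}$ for $k>2$ as the maximal connected examples, which contain Smith's $\tilde A_n,\tilde D_n$ as subgraphs.

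For the charged case, suppose $A$ has a vertex $v$ with charge $\pm1$. Since $\deg v\leqslant 4$, there are at most three further unit-weight edges at $v$, and interlacing $A$ against the (finitely many, directly enumerated) small charged cyclotomic graphs tightly constrains the local picture: charged vertices must be well separated and the graph cannot branch. The resulting structures fall into two regimes --- a handful of rigid ``cube-like'' configurations on seven and eight vertices that do not extend, giving $S_7$, $S_8$, $S_8^\prime$, and a path/ladder that can only be capped at each end by a pair of charge-$1$ vertices, giving, up to equivalence, $C^{++}_{2k}$ or $C^{+-}_{2k}$ for $k>1$; all of these are then checked to be maximal.

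Finally, that every indecomposable cyclotomic $\Z$-matrix is contained in a maximal one follows from the same growing argument: an indecomposable cyclotomic matrix $B$ not already in the list is a proper principal submatrix of a strictly larger indecomposable cyclotomic matrix, at each stage only finitely many one-vertex extensions need be examined by interlacing, and one verifies that the process always terminates at a member of the list above. I expect the main obstacle to be precisely this exhaustive bookkeeping: confirming that the $A_n/D_n$-embedded configurations yield exactly the $T_{2k}$ family and nothing more, that the exceptional ($E$-type) search is genuinely complete, and that the listed graphs are pairwise inequivalent under switching and negation. A related subtlety is that ``maximal'' is weaker than ``of maximum order'' --- for each claimed maximal graph one must rule out every single one-vertex extension --- and it is here that Lemma~\ref{lem:maxDeg4} and Theorem~\ref{thm:interlacing} supply the decisive pruning.
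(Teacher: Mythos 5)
This theorem is not proved in the paper at all: it is quoted verbatim from McKee and Smyth~\cite{McKee:IntSymCyc07} and used as a known input to the classification over real quadratic rings, so there is no in-paper proof to compare against. That said, your sketch does follow the broad strategy of the original McKee--Smyth argument: the reduction via Lemma~\ref{lem:maxDeg4} to entries in $\{0,\pm1,\pm2\}$ and the immediate isolation of $S_1$ and $S_2$ are correct; the reformulation of cyclotomicity as positive semidefiniteness of $2I\pm A$, the resulting interpretation of $2I+A$ (modulo its radical) as a Gram matrix of roots, and the appeal to the $A_n/D_n/E_n$ classification of root lattices are exactly the engine of their proof; and the ``grow by one vertex, prune by interlacing'' procedure is how both the sporadic examples and the containment-in-a-maximal statement are actually established.

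The gap is that what you have written is an outline, not a proof: for a classification theorem of this kind the ``exhaustive bookkeeping'' you defer \emph{is} the mathematical content. Concretely, you do not verify (i) that every cyclotomic configuration embedded in $A_n$ or $D_n$ really lands inside some $T_{2k}$ and that no further maximal examples arise there, (ii) that the $E_8$-type search is complete and yields only $S_{14}$ and $S_{16}$, or (iii) the charged case, which you treat only impressionistically --- in McKee--Smyth the charges are also fed into the Gram-vector machinery (a charge $\pm1$ vertex gives a vector of norm $1$ or $3$ in one of $2I\pm A$), and the families $C^{++}_{2k}$, $C^{+-}_{2k}$ and the sporadics $S_7$, $S_8$, $S_8'$ emerge from that analysis rather than from an unspecified ``local picture.'' You correctly identify maximality (ruling out every one-vertex extension of each listed graph) as a separate obligation, but you do not discharge it. So the proposal is a faithful roadmap to the cited proof rather than a proof; as used in this paper, the honest course is simply to cite~\cite{McKee:IntSymCyc07}, which is what the author does.
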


\begin{theorem}[Cyclotomic matrices over $\Z[\sqrt{2}{]}$]
	\label{thm:class}
	Let $A$ be a maximal indecomposable cyclotomic matrix over the ring $\Z[\sqrt{2}]$ that is not a $\Z$-matrix.
	Then $A$ is equivalent to an adjacency matrix of one of the graphs $C_{2k}$ (for $k >1$), $C_{2k+1}$ (for $k >0$), $S_2^\ddag$, $S_4^{(1,\sqrt{2})}$, $S_4^{(2,\sqrt{2})}$, $S_4^{(3,\sqrt{2})}$, and $S_8^\dag$ in Figures~\ref{fig:maxcycs2}, \ref{fig:maxcycs4}, \ref{fig:maxcycs8}, and \ref{fig:maxcycs10}.
	
	Moreover, every indecomposable cyclotomic $\Z[\sqrt{2}]$-matrix is contained in a maximal one.
\end{theorem}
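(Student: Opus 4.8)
The plan is to follow the treatment of the Eisenstein case in~\cite{Greaves:CycloEG11}: pin down the admissible edge-weights, pass to a ``growing'' search over the wider class $\mathfrak S'$, run it, and then extract the genuinely cyclotomic maximal graphs. For the first step, let $A \in \mathfrak S'_n$ be a $\Z[\sqrt 2]$-matrix. Applying Lemma~\ref{lem:maxDeg4} to both $A$ and its Galois conjugate $\sigma(A)$ (the matrix obtained by replacing $\sqrt 2$ by $-\sqrt 2$), every entry $x = a + b\sqrt 2$ of $A$ must satisfy $x^2 \leqslant 4$ and $\bar x^{\,2} \leqslant 4$; a short case check forces $x \in \{0,\pm 1,\pm 2,\pm\sqrt 2\}$. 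Moreover a vertex incident to an edge of weight $\pm 2$, or carrying a charge $\pm 2$, already has degree $4$ and so supports no other nonzero entry, so the only indecomposable matrices in $\mathfrak S'_n$ using such a weight are (equivalent to) $S_1$ and $S_2$, which lie over $\Z$. Thus a maximal indecomposable cyclotomic $\Z[\sqrt 2]$-matrix that is not a $\Z$-matrix has every entry in $\{0,\pm 1,\pm\sqrt 2\}$, with at least one entry equal to $\pm\sqrt 2$.

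Next I would reduce to a search over $\mathfrak S'$. Every cyclotomic $\Z[\sqrt 2]$-matrix lies in $\mathfrak S'_n$ --- its integral characteristic polynomial is fixed by $\sigma$, so $\sigma(A)$ has the same spectrum --- and by Theorem~\ref{thm:interlacing}, applied to $A$ and $\sigma(A)$ at once, $\mathfrak S'$ is closed under passing to principal submatrices, which the class of cyclotomic matrices is not. So I would first classify the maximal indecomposable elements of $\mathfrak S'$ with entries in $\{0,\pm 1,\pm\sqrt 2\}$, exactly as in~\cite{Greaves:CycloEG11}: assemble a finite list of minimal forbidden subgraphs (connected graphs on this weight set that fail to lie in $\mathfrak S'$) so that interlacing excludes any supergraph of one of them; then grow graphs one vertex at a time from the small seeds that already contain a $\pm\sqrt 2$ edge or charge, at each stage discarding graphs that contain a forbidden subgraph and collapsing graphs equivalent under switching, relabelling, negation, and the Galois action $\sqrt 2 \mapsto -\sqrt 2$. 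A graph with no $\mathfrak S'$-supergraph is recorded as maximal.

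The main obstacle is to show this search terminates except along the branches producing the infinite families $C_{2k}$ and $C_{2k+1}$. Following~\cite{Greaves:CycloEG11} (and ultimately the integer classification), one shows that a connected graph in $\mathfrak S'$ on sufficiently many vertices must contain a long ``cylindrical'' ladder as a principal subgraph, and that a maximal such graph is then determined by the two configurations capping the ends of the ladder; over $\Z$ the possible caps are a toral identification or a pair of $\pm 1$-charged vertices, while the new phenomenon over $\Z[\sqrt 2]$ is a cap consisting of a single vertex joined by two $\pm\sqrt 2$-edges, and combining caps gives exactly the new families $C_{2k}$ (two $\sqrt 2$-caps) and $C_{2k+1}$ (one $\sqrt 2$-cap, one charged cap). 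The remaining, bounded maximal graphs are then found by a finite search --- here the ordering on $\mathcal R\subset\R$ and Perron--Frobenius theory help keep the search tractable --- and yield $S_2^{\ddagger}$, $S_4^{(1,\sqrt 2)}$, $S_4^{(2,\sqrt 2)}$, $S_4^{(3,\sqrt 2)}$ and $S_8^{\dagger}$. Finally, each graph on the resulting list is Galois invariant --- applying $\sigma$ negates its $\pm\sqrt 2$-edges, which are then restored by switching at the capping vertices --- so its characteristic polynomial is integral by Proposition~\ref{pro:Ginvariance} and it is genuinely cyclotomic; and conversely every cyclotomic $\Z[\sqrt 2]$-matrix that is not a $\Z$-matrix lies inside one of these, for order $> 6$ because $\mathfrak S_n$ and $\mathfrak S'_n$ coincide there, and for order $\leqslant 6$ by the direct check in the finitely many remaining cases.
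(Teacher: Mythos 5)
Your proposal is correct and follows essentially the route the paper itself indicates: the paper gives no detailed proof of Theorem~\ref{thm:class}, stating only that it ``follows from the technique in \cite{Greaves:CycloEG11}'' (Section~7 of that paper, with full details in the author's thesis), and your sketch --- bounding entries via Lemma~\ref{lem:maxDeg4} applied to $A$ and $\sigma(A)$, running an excluded-subgraph growing search over the submatrix-closed class $\mathfrak S'$, handling the unbounded branches via the cylinder-with-caps structure, and recovering integrality from Galois invariance via Proposition~\ref{pro:Ginvariance} --- is exactly that technique, worked out in somewhat more detail than the paper provides.
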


Let $\varphi$ denote the golden ratio, $1/2+\sqrt{5}/2$, so that $\Z[\varphi]$ is the ring of integers of $\Q(\sqrt{5})$.
We let $\bar{\varphi}$ denote the conjugate of the golden ratio, $1/2-\sqrt{5}/2$.

\begin{theorem}[Cyclotomic matrices over $\Z[\varphi{]}$]
	\label{thm:c5}
	Let $A$ be a maximal indecomposable cyclotomic matrix over the ring $\Z[\varphi]$ that is not a $\Z$-matrix.
	Then $A$ is equivalent to an adjacency matrix of one of the graphs $S_3$, $S_4^{(1,\varphi)}$, $S_4^{(2,\varphi)}$, $S_4^{(3,\varphi)}$, $S_6$, $S_8^{\dag \dag}$, and $S_8^\ddag$ in Figures~\ref{fig:maxcycs8} and \ref{fig:maxcycs10}.
	
	Moreover, every indecomposable cyclotomic $\Z[\varphi]$-matrix is contained in a maximal one.
\end{theorem}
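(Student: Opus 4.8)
The plan is to run the classification machinery of~\cite{Greaves:CycloEG11}, developed there for the Eisenstein and Gaussian integers, with the real quadratic ring $\Z[\varphi]$ in its place, exactly as is done for $\Z[\sqrt2]$ in Theorem~\ref{thm:class}. The first task is to pin down the admissible entries. If $A$ is cyclotomic then $\chi_A\in\Z[x]$, so it is fixed by the nontrivial automorphism $\sigma$ of $\Q(\sqrt5)/\Q$ and $\chi_{\sigma(A)}=\sigma(\chi_A)=\chi_A$; thus $\sigma(A)$ has the same eigenvalues as $A$ and is again cyclotomic. Applying Lemma~\ref{lem:maxDeg4} to both $A$ and $\sigma(A)$, an entry $w=a+b\varphi$ must satisfy $w^2\leqslant4$ and $(a+b\bar\varphi)^2\leqslant4$, and solving these inequalities forces $w\in\{0,\pm1,\pm2,\pm\varphi,\pm\bar\varphi\}$, where $\bar\varphi=1-\varphi$. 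A vertex incident to an entry of modulus $2$ already has degree $4$, hence no other nonzero entry in its row, which makes a connected component $S_1$ or $S_2$; so for an indecomposable matrix that is not already a $\Z$-matrix every entry lies in $\{0,\pm1,\pm\varphi,\pm\bar\varphi\}$. Since $w^2+\sigma(w)^2$ equals $2$ for $w=\pm1$ and $3$ for $w\in\{\pm\varphi,\pm\bar\varphi\}$, the joint degree bounds $\sum_u w(u,v)^2\leqslant4$ and $\sum_u\sigma(w(u,v))^2\leqslant4$ leave each vertex with at most four nonzero row entries (charge included), at most two of them irrational, so the candidate graphs are locally finite.

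Next I would reduce to maximal graphs. By the decomposition reduction of Section~\ref{sec:useful} the underlying graph may be taken connected, and since a principal submatrix of a $\Z$-matrix is a $\Z$-matrix, a cyclotomic $\Z[\varphi]$-matrix that is not over $\Z$ can occur only as a principal submatrix of cyclotomic $\Z[\varphi]$-matrices that are not over $\Z$. So it suffices to classify all connected cyclotomic $\Z[\varphi]$-graphs that are not $\Z$-graphs, the $\Z$-valued ones being supplied by Theorem~\ref{thm:ISM}. This is carried out by the standard growing argument driven by interlacing (Theorem~\ref{thm:interlacing}): one builds graphs a vertex at a time over the alphabet $\{0,\pm1,\pm\varphi,\pm\bar\varphi\}$, retaining at each step only those whose spectrum stays in $[-2,2]$ — equivalently, those avoiding a finite list of minimal ``excluded'' subgraphs (short charged and uncharged paths, triangles, stars and cycles whose weightings give an eigenvalue outside $[-2,2]$), which one compiles first up to equivalence. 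Because the candidates are locally finite, the whole classification reduces to showing that this search terminates, i.e., that no connected cyclotomic $\Z[\varphi]$-graph carrying an irrational entry can have large order.

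For the case analysis I would split into uncharged and charged graphs. If $A$ is uncharged and cyclotomic then $2I-A$ and $2I+A$ are positive semidefinite, so $2I-A$ is the Gram matrix of vectors of norm $2$ with pairwise inner products in $\{0,\pm1,\pm\varphi,\pm\bar\varphi\}$, and the same is true after applying $\sigma$; this confines the $\Z[\varphi]$-span of those vectors to a very restricted norm-$2$ $\Z[\varphi]$-lattice, playing the role that the $ADE$ root systems play over $\Z$, whose connected configurations can be enumerated (or, alternatively, the interlacing-driven growing search can be pushed through directly as in~\cite{Greaves:CycloEG11}). Either route yields the uncharged non-$\Z$ maximal graphs $S_6$, $S_8^{\dag\dag}$, $S_8^\ddag$ and nothing larger; the reason no unbounded family appears here, in contrast to the $\Z$-families $T_{2k}$, $C^{++}_{2k}$, $C^{+-}_{2k}$ and the $\Z[\sqrt2]$-families $C_{2k}$, $C_{2k+1}$, is that $\varphi^2=\varphi+1\notin\Z$, so integrality of the characteristic polynomial cannot be preserved while a $\varphi$- or $\bar\varphi$-entry is propagated through a long graph. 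The charged case is again the growing search: a charge $\pm\varphi$ or $\pm\bar\varphi$ together with the weights permitted at its neighbours is so constrained that the search closes off after a few vertices, producing $S_3$, $S_4^{(1,\varphi)}$, $S_4^{(2,\varphi)}$, $S_4^{(3,\varphi)}$. Finally one checks that each of the seven graphs does have characteristic polynomial in $\Z[x]$: six of them are Galois-invariant, so this follows from Proposition~\ref{pro:Ginvariance}, while $S_4^{(2,\varphi)}$, which is not Galois-invariant, is checked by direct computation; and a short verification that none of the seven properly contains a maximal cyclotomic $\Z$-graph of Theorem~\ref{thm:ISM} — it suffices to compare with the $\Z$-graphs of order at most $8$ — gives the ``moreover'' clause.

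The main obstacle is the termination of the growing argument: proving that no genuinely $\Z[\varphi]$-valued connected cyclotomic graph has order exceeding $8$. The conceptual ingredients — the entry restriction, the reduction to connected and then to maximal graphs, and the Gram-matrix/lattice picture for uncharged graphs — are routine; the real work, as in~\cite{Greaves:CycloEG11}, is the large but finite case analysis excluding every ``nearly-large'' configuration, in practice done by computer. Alternatively, and in line with the structure of the present paper, the theorem follows from the classification of maximal cyclotomic $\mathcal R$-matrices: the graphs $S_3,\dots,S_8^\ddag$ together with the $\Z$-graphs of Theorem~\ref{thm:ISM} are precisely the maximal cyclotomic $\mathcal R$-graphs all of whose entries lie in $\Z[\varphi]$, and a maximal cyclotomic $\Z[\varphi]$-graph not over $\Z$ is automatically maximal over $\mathcal R$, since its irrational entries confine it to this sublist.
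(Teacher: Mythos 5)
Your proposal is correct and takes essentially the same route as the paper: the paper proves Theorem~\ref{thm:c5} by restricting the admissible entries (to $\{0,\pm1,\pm2,\pm\varphi,\pm\bar\varphi\}$, via Lemma~\ref{lem:maxDeg4} applied to all Galois conjugates) and then running an interlacing-driven exhaustive computation of $\mathfrak S^\prime_n$ over $\Z[\varphi]$ up to order $9$, where the search terminates because $\mathfrak S^\prime_9$ contains no $\Z[\varphi]$-matrices, so interlacing rules out all larger orders. Your additional framing (the Gram-matrix/lattice picture, and the explicit Galois-invariance check with $S_4^{(2,\varphi)}$ handled separately) is consistent with the paper, which itself defers the computational details to the author's thesis and \cite{Greaves:CycloEG11}.
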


\begin{theorem}[Cyclotomic matrices over $\Z[\sqrt{3}{]}$]
	\label{thm:c3}
	Let $A$ be a maximal indecomposable cyclotomic matrix over the ring $\Z[\sqrt{3}]$ that is not a $\Z$-matrix.
	Then $A$ is equivalent to an adjacency matrix of one of the graphs $S_2^\prime$, $S_2^\dag$, and $S_4^{(\sqrt{3})}$ in Figure~\ref{fig:maxcycs8}.
	
	Moreover, every indecomposable cyclotomic $\Z[\sqrt{3}]$-matrix is contained in a maximal one.
\end{theorem}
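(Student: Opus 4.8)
The plan is to argue directly, in the manner of the author's earlier paper~\cite{Greaves:CycloEG11}: first bound the possible matrix entries using the degree estimate, and then dispose of the small remaining possibilities by combining interlacing with the requirement that the characteristic polynomial lie in $\Z[x]$.

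The first step is to pin down the entries. Since $A$ is cyclotomic over $\Z[\sqrt3]$ we have $\chi_A\in\Z[x]$, so for the nontrivial automorphism $\sigma$ of $\Q(\sqrt3)$ the matrix $\sigma(A)$ is symmetric with $\chi_{\sigma(A)}=\sigma(\chi_A)=\chi_A$, hence is again cyclotomic. Thus, by Lemma~\ref{lem:maxDeg4}, every entry $w$ of $A$ satisfies $w^2\le4$ and $\sigma(w)^2\le4$, and the only elements of $\Z[\sqrt3]$ with this property are $0,\pm1,\pm2,\pm\sqrt3$. A vertex incident to a $\pm2$-edge, or carrying a $\pm2$-charge, then has degree $4$ and no other nonzero incident entry, which forces $A$ to be equivalent to $S_1$ or $S_2$; both are $\Z$-matrices. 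As $A$ is assumed not to be a $\Z$-matrix, its entries therefore all lie in $\{0,\pm1,\pm\sqrt3\}$ and at least one equals $\pm\sqrt3$. Applying the degree bound once more: a vertex on a $\sqrt3$-edge has at most one further nonzero incident entry, necessarily of absolute value $1$; and a vertex carrying a $\pm\sqrt3$-charge has precisely one incident edge, of weight $\pm1$, after which it is full.

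Next I would split into cases according to whether a $\sqrt3$ occurs as a charge. If a vertex $v$ carries a $\pm\sqrt3$-charge, then $v$ has a unique neighbour $u$, joined by a weight-$\pm1$ edge, and interlacing applied to the $2\times2$ principal submatrix on $\{u,v\}$ rules out every value of the charge of $u$ except the one of absolute value $\sqrt3$ with sign opposite to that of $v$; now $u$ and $v$ are both full, so $A$ is equivalent to $S_2^\prime$. Otherwise no $\sqrt3$-charge occurs; pick a $\sqrt3$-edge $ab$. If $A$ has more than two vertices then connectedness gives $a$ or $b$ a further neighbour — say $d$ joined to $b$ by a weight-$\pm1$ edge, after which $b$ is full — so $A$ contains the uncharged three-vertex path with edge-weights $\sqrt3$ and $1$. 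A finite check (interlacing for the spectrum, integrality for $\chi_A$) of all ways to extend this path by one vertex, or by a charge, shows that the only cyclotomic graph properly containing it is $S_4^{(\sqrt3)}$, which is full at every vertex; hence $A$ has at most four vertices. Together with the analysis of the two-vertex graph consisting of the edge $ab$ alone — cyclotomic only when both endpoint charges are $0$, in which case it sits inside $S_4^{(\sqrt3)}$, or when the endpoint charges are $1$ and $-1$, in which case it is $S_2^\dag$ — this yields the stated list. The same local analysis shows every indecomposable cyclotomic $\Z[\sqrt3]$-matrix that is not a $\Z$-matrix lies inside one of $S_2^\prime,S_2^\dag,S_4^{(\sqrt3)}$; a cyclotomic $\Z$-matrix lies inside a maximal cyclotomic $\Z$-matrix by Theorem~\ref{thm:ISM}, and that matrix stays maximal over $\Z[\sqrt3]$, because any proper cyclotomic $\Z[\sqrt3]$-supermatrix of it would carry a $\sqrt3$-entry and so be a subgraph of one of the three graphs above, none of which contains a maximal connected cyclotomic $\Z$-graph.

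The step I expect to be the main obstacle is this interplay between the spectral condition and the integrality condition. Over $\Z$ only the eigenvalues matter, but over $\Z[\sqrt3]$ a symmetric matrix can have all eigenvalues in $[-2,2]$ while $\chi\notin\Z[x]$ — for instance any triangle carrying a $\sqrt3$-edge, or any $4$-cycle carrying exactly one $\sqrt3$-edge, has a characteristic polynomial outside $\Z[x]$. Carrying this second constraint through the growing step is precisely what forces the $\sqrt3$-entries into the few tightly balanced configurations exhibited by $S_2^\prime$, $S_2^\dag$ and $S_4^{(\sqrt3)}$, and collapses the classification to just these three graphs.
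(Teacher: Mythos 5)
Your argument is correct and is essentially the paper's own proof made explicit: the paper likewise reduces Theorem~\ref{thm:c3} to a finite enumeration of $\Z[\sqrt{3}]$-matrices up to degree $4$ (resting on the degree bound of Lemma~\ref{lem:maxDeg4} applied to $A$ and its Galois conjugate, interlacing against the graphs of Figure~\ref{fig:cycloPaths}, and the growing algorithm of Section~\ref{sec:classR}), but delegates that enumeration to a computer search, whereas you carry out the same search by hand via the charge/edge case split, the $2\times2$ check, and the $3$-path extension. The only quibble is that your closing paragraph overstates the role of the integrality condition here: the triangles and $4$-cycles carrying a single $\sqrt{3}$-edge are already excluded because some Galois conjugate has spectral radius exceeding $2$, so beyond the one-vertex matrix $(\pm\sqrt{3})$ the spectral condition on conjugates does all the work over $\Z[\sqrt{3}]$ --- consistent with Table~\ref{tab:sndsn}, which records no elements of $\mathfrak S^\prime_n \setminus \mathfrak S_n$ over $\Z[\sqrt{3}]$ for $n \geqslant 2$.
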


Theorem~\ref{thm:c5} can be proved by computation of $\Z[\varphi]$-matrices up to degree $8$ and for Theorem~\ref{thm:c3} it suffices to compute $\Z[\sqrt{3}]$-matrices up to degree $4$. 
By interlacing, for all $k \geqslant 2$, each matrix in $\mathfrak S_k^\prime$ contains at least one matrix from $\mathfrak S^\prime_{k-1}$.
From our computations, we have that there are no $\Z[\varphi]$-matrices in $\mathfrak S^\prime_9$, and hence, by interlacing, neither are there $\Z[\varphi]$-matrices in $\mathfrak S^\prime_k$ for $k > 9$ and similarly, there are no $\Z[\sqrt{3}]$-matrices in $\mathfrak S^\prime_k$ for $k > 4$.
Theorem~\ref{thm:class} follows from the technique in the author's paper \cite{Greaves:CycloEG11}, in particular, the proof technique strongly resembles Section~7 of that paper.
See the author's thesis for full details.

Let $\mathcal R$ be the compositum of all real quadratic integer rings $\mathcal O_{\Q(\sqrt{d})}$ where $d > 1$ is squarefree. 

\begin{theorem}[Cyclotomic matrices over $\mathcal R$]
	\label{thm:classR}
	Let $A$ be an indecomposable cyclotomic matrix over the ring $\mathcal R$.
	Then $A$ is a symmetric matrix over $\Z$, $\Z[\sqrt{2}]$, $\Z[\varphi]$, or $\Z[\sqrt{3}]$.
\end{theorem}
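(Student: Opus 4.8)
The plan is: bound the entries of a cyclotomic $\mathcal R$-matrix via Kronecker's theorem and the arithmetic of $\mathcal R$; then use the degree bound of Lemma~\ref{lem:maxDeg4}, applied to \emph{every} Galois conjugate of the matrix, together with interlacing, to prevent the irrational entries from belonging to two different quadratic fields; finally invoke the single-ring classifications of Theorems~\ref{thm:ISM},~\ref{thm:class},~\ref{thm:c5}, and~\ref{thm:c3}.

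\emph{Step 1 (the possible entries).} Let $A$ be an indecomposable cyclotomic $\mathcal R$-matrix and $a$ one of its entries. Since $\chi_A\in\Z[x]$, for every $\sigma$ in the Galois group of a normal closure of $L_A$ we have $\chi_{\sigma(A)}=\sigma(\chi_A)=\chi_A$, so every conjugate of $a$ is an eigenvalue of a matrix with the same $[-2,2]$-rooted integer polynomial and hence lies in $[-2,2]$. Thus $a$ is a totally real algebraic integer with all conjugates in $[-2,2]$, so by Kronecker's theorem $a=\zeta+\zeta^{-1}$ for a root of unity $\zeta$ and $\Q(a)$ is a real cyclotomic field contained in a compositum of real quadratic fields; hence $\operatorname{Gal}(\Q(a)/\Q)$ is an elementary abelian $2$-group. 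An enumeration of the real cyclotomic fields with this property gives $\Q(a)\in\{\Q,\Q(\sqrt2),\Q(\sqrt3),\Q(\sqrt5),\Q(\sqrt2,\sqrt3)\}$. In the last case $a\in\{\pm\tfrac{\sqrt6+\sqrt2}{2},\pm\tfrac{\sqrt6-\sqrt2}{2}\}$; but since $\mathcal R$ is the ring \emph{generated by} the quadratic integer rings (not the ring of integers of their compositum) and $2,3,6$ are all $\not\equiv1\pmod4$, one has $\mathcal R\cap\Q(\sqrt2,\sqrt3)=\Z[\sqrt2,\sqrt3]$, which contains none of those four numbers. Hence every entry of $A$ lies in $\{0,\pm1,\pm2,\pm\sqrt2,\pm\sqrt3,\pm\varphi,\pm\bar\varphi\}$.

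\emph{Step 2 (no vertex mixes fields).} Call an irrational entry \emph{heavy}, of type $\sqrt2$, $\sqrt3$, or $\sqrt5$ accordingly. Each Galois conjugate $\sigma(A)$ is again cyclotomic, so by Lemma~\ref{lem:maxDeg4} at every vertex $v$ the quantity $\sum_u\sigma(w(u,v))^2+\sigma(w(v,v))^2$ is at most $4$ for all $\sigma$. Now $(\pm\sqrt2)^2=2$ and $(\pm\sqrt3)^2=3$ are $\sigma$-invariant, while $\{\varphi^2,\bar\varphi^2\}=\{\tfrac{3+\sqrt5}{2},\tfrac{3-\sqrt5}{2}\}$ is permuted by $\sqrt5\mapsto-\sqrt5$, so a $\pm\varphi$ or $\pm\bar\varphi$ entry contributes $\varphi^2=\tfrac{3+\sqrt5}{2}>\tfrac52$ to one conjugate. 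Adding up worst cases one sees that a vertex meeting (as an edge weight or a charge) a heavy entry of one type meets no heavy entry of a different type, and meets at most one $\pm\sqrt3$ and at most one $\pm\varphi$.

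\emph{Step 3 (the whole graph does not mix fields --- the crux).} Suppose the connected graph $G$ of $A$ carries heavy entries of two types. By Step~2 the vertices meeting heavy entries fall into monochromatic parts, and connectedness yields an induced path in $G$ joining two parts of different types whose interior vertices meet only $\{0,\pm1\}$-edges. Deleting all other vertices, interlacing (Theorem~\ref{thm:interlacing}) leaves a principal submatrix which, after the twist $\varphi\leftrightarrow\bar\varphi$ if needed, is a weighted path (or, if two such connecting paths exist, a weighted cycle) with both end-edges of modulus at least $\sqrt2$, interior edges of weight $\pm1$, and perhaps some charges $\pm1$. Comparing this entrywise with the affine diagram $\tilde C_n$ (respectively $\tilde A_n$), which has spectral radius exactly $2$, Perron--Frobenius theory forces an eigenvalue of modulus $>2$, contradicting the cyclotomicity of $A$ via interlacing; the finitely many boundary configurations (e.g.\ a $\pm\sqrt3$-charged vertex, which one checks forces $G=S_2^\prime$) are handled separately. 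Hence all heavy entries of $A$ lie in a single one of $\Z[\sqrt2]$, $\Z[\sqrt3]$, $\Z[\varphi]$, so $A$ is a symmetric matrix over $\Z$, $\Z[\sqrt2]$, $\Z[\varphi]$, or $\Z[\sqrt3]$, and the cited theorems describe all such matrices. The main obstacle is Step~3: the crude degree bound forbids two heavy entries \emph{at one vertex} but not two heavy regions linked by a long chain of $\pm1$-edges, and making the comparison with an affine diagram airtight requires care with charges (they perturb the diagonal and so escape the nonnegative Perron--Frobenius comparison) and with the handful of boundary cases of spectral radius exactly $2$.
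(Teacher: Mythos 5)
Your overall strategy is the same as the paper's: bound the possible entries using Lemma~\ref{lem:maxDeg4} applied to every Galois conjugate of $A$, then show that two irrational entries generating different quadratic fields cannot coexist in a connected graph by locating a connecting path (or short cycle) and comparing it, via Perron--Frobenius and interlacing, with a nonnegative graph of spectral radius exactly $2$; the affine diagrams you invoke are precisely the graphs $P^{(1)}_n$, $P^{(2)}_n$, $P^{(3)}_n$, $Q_n$ of Figure~\ref{fig:cycloPaths}, and your Step~3 is the paper's Proposition~\ref{pro:oneAtaTime}. (Two small repairs to Step~1: a Galois conjugate of an \emph{entry} of $A$ is not an eigenvalue of $\sigma(A)$ --- the correct route to ``all conjugates of $a$ lie in $[-2,2]$'' is Lemma~\ref{lem:maxDeg4} applied to $\sigma(A)$, which is again cyclotomic because $\chi_{\sigma(A)}=\sigma(\chi_A)=\chi_A$; and a vertex may meet one $\pm\varphi$ \emph{and} one $\pm\bar\varphi$ entry simultaneously, as in $S_3$ and $S_4^{(1,\varphi)}$, so ``at most one $\pm\varphi$'' needs restating.)

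The genuine gap is exactly where you flag it: Step~3 in the presence of charges. A charge cannot be normalised by switching (conjugation by $\operatorname{diag}(1,\dots,-1,\dots,1)$ fixes diagonal entries), so a $-1$ charge on the connecting path sits \emph{below} the diagonal of $P^{(2)}_n$ or $P^{(3)}_n$ and the comparison $H>P$ needed for Theorem~\ref{thm:PerronFrobenius}(b) simply fails; likewise an irrational charge ($\pm\varphi$, $\pm\sqrt2$, $\pm\sqrt3$) at an endpoint is covered by none of the comparison graphs. Asserting that these ``finitely many boundary configurations are handled separately'' is not a proof --- they are the actual content. The paper closes this hole with a finite, computer-assisted step: it enumerates \emph{all} graphs in $\mathfrak S^\prime_n$ containing one of five seed graphs $X_1,\dots,X_5$ (a $\varphi$-charged vertex with an incident $\bar\varphi$-edge, and the charged pairs $+1\sim-1$ joined by edges of weight $\sqrt2$, $\sqrt3$, $\varphi$, $1$), observes that every such graph already lies over a single quadratic ring, and may thereafter assume these configurations are absent from $G$. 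Only after that exclusion does minimality force the connecting subgraph to be a path or triangle with interior edge-weights $1$ and interior charges in $\{0,\pm1\}$ which, after switching and Galois conjugation of the edge-weights, genuinely dominates one of $P^{(1)}_n$, $P^{(2)}_n$, $P^{(3)}_n$, $Q_3$. You need either this finite enumeration or a replacement for it; without it the Perron--Frobenius comparison does not go through in the charged cases.
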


\begin{corollary}\label{cor:SnSdashn}
	For $n > 6$ we have $\mathfrak S_n = \mathfrak S^\prime_n$.
\end{corollary}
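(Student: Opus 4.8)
I would prove the two inclusions separately. The inclusion $\mathfrak S_n \subseteq \mathfrak S^\prime_n$ holds for every $n$ and is immediate: if $A$ is cyclotomic then $\chi_A \in \Z[x]$, so $\chi_{\sigma(A)} = \sigma(\chi_A) = \chi_A$ for every $\sigma \in \operatorname{Gal}(L_A/\Q)$, whence $\sigma(A)$ has the same spectrum as $A$, which lies in $[-2,2]$. The substance of the corollary is the reverse inclusion for $n > 6$: every $A \in \mathfrak S^\prime_n$ must have $\chi_A \in \Z[x]$.

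The first step is to pin down the possible entries of such an $A$. Since $\mathfrak S^\prime_n$ is closed under Galois conjugation, Lemma~\ref{lem:maxDeg4} applies to $A$ and to each $\sigma(A)$, so every entry $a$ of $A$ is an algebraic integer of degree at most $2$ all of whose conjugates lie in $[-2,2]$. Writing its minimal polynomial as $t^2 - st + p$ (or $t - s$), the constraints $\abs{s} \leqslant 4$, $\abs{p} \leqslant 4$ and $s^2 \geqslant 4p$ leave exactly the values $0, \pm 1, \pm 2, \pm\sqrt{2}, \pm\sqrt{3}, \pm\varphi, \pm\bar\varphi$. In particular $L_A \subseteq \Q(\sqrt{2},\sqrt{3},\sqrt{5})$ and $\operatorname{Gal}(L_A/\Q)$ is an elementary abelian $2$-group of order dividing $8$.

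The heart of the argument is to show that, for $n > 6$, the entries of $A$ in fact lie in a single one of the rings $\Z$, $\Z[\sqrt{2}]$, $\Z[\varphi]$, $\Z[\sqrt{3}]$; the corollary then follows by cases. Over $\Z$ there is nothing to prove. By the computations behind Theorems~\ref{thm:c5} and~\ref{thm:c3}, $\mathfrak S^\prime_k$ contains no $\Z[\sqrt{3}]$-matrix for $k > 4$ and no $\Z[\varphi]$-matrix for $k > 8$, so for $n > 6$ only $\Z[\varphi]$-matrices of orders $7$ and $8$ remain, and those same computations exhibit each of them as cyclotomic; over $\Z[\sqrt{2}]$ the classification method of~\cite{Greaves:CycloEG11} underlying Theorem~\ref{thm:class} identifies every member of $\mathfrak S^\prime_n$ with $n > 6$ as a principal submatrix of one of $C_{2k}$, $C_{2k+1}$, $S_8^\dag$, hence as cyclotomic. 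The reduction to a single ring itself I would obtain from the entry list together with the degree-$4$ bound applied to $A$ and to all its conjugates: a vertex incident to a $\sqrt{2}$- or $\sqrt{3}$-edge cannot also be incident to a $\varphi$- or $\bar\varphi$-edge (conjugating $\varphi \leftrightarrow \bar\varphi$ would raise its degree above $4$), and $\sqrt{2}$- and $\sqrt{3}$-edges cannot share a vertex; it then remains to exclude an indecomposable matrix of order $> 6$ in $\mathfrak S^\prime_n$ bridging two such ``worlds'' by a $\Z$-weighted path, which one handles by the interlacing and saturation arguments of~\cite{Greaves:CycloEG11}, exploiting also the Perron--Frobenius structure available over $\R$.

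I expect this last step — excluding mixed matrices of large order — to be the main obstacle, essentially because non-integrality of the characteristic polynomial, unlike the spectral condition, is not inherited by principal submatrices (as the $2\times 2$ example of the introduction already shows). There is therefore no clean induction on $n$, and the structural classification must be pushed all the way through for the larger class $\mathfrak S^\prime_n$ rather than merely for $\mathfrak S_n$.
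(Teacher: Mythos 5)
Your overall architecture (restrict the possible entries, reduce to a single quadratic ring, then argue ring by ring) matches the paper's, and the $\Z$, $\Z[\sqrt{3}]$ and $\Z[\varphi]$ cases go through as you describe: the last two are finite by the computations behind Theorems~\ref{thm:c3} and~\ref{thm:c5}, together with the computed equalities $\mathfrak S^\prime_7 = \mathfrak S_7$ and $\mathfrak S^\prime_8 = \mathfrak S_8$. The genuine gap is in your $\Z[\sqrt{2}]$ case, which is the only case surviving to unbounded $n$ and hence the whole substance of the corollary. You conclude that every member of $\mathfrak S^\prime_n$ is ``a principal submatrix of one of $C_{2k}$, $C_{2k+1}$, $S_8^\dag$, hence \ldots\ cyclotomic''. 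That ``hence'' founders on exactly the non-inheritance you flag in your own closing paragraph: a principal submatrix of a cyclotomic matrix automatically satisfies the spectral condition for all conjugates, but its characteristic polynomial need not be integral --- Table~\ref{tab:sndsn} consists precisely of submatrices of maximal cyclotomic graphs that fail integrality. So embedding $A \in \mathfrak S^\prime_n$ into a maximal cyclotomic graph only re-establishes $A \in \mathfrak S^\prime_n$; it does not put $A$ in $\mathfrak S_n$.

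The paper closes this gap with a separate structural argument, Lemma~\ref{lem:snGinvariant}. A computation up to order $5$ shows that every $\Z[\sqrt{2}]$-matrix in $\mathfrak S^\prime_5$ has all its charges in $\Z$, and interlacing propagates this to all $k > 5$; for such a matrix, every cycle of the associated graph must carry an even number of edges of weight $\pm\sqrt{2}$, since an odd cycle would, after switching and Galois conjugation, dominate one of the spectral-radius-$2$ graphs $Q_k$, $P^{(2)}_k$, $P^{(3)}_k$ and force an eigenvalue beyond $2$. This parity condition makes the graph Galois invariant, whence $\chi_A \in \Z[x]$ by Proposition~\ref{pro:Ginvariance}. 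Some mechanism of this kind --- a direct proof of Galois invariance rather than containment in a maximal cyclotomic --- is what your sketch is missing. (A smaller point: your reduction to a single ring needs more than the degree count at a shared vertex, since two non-conjugate irrational entries may be joined only by a long $\Z$-weighted path; this is Proposition~\ref{pro:oneAtaTime}, proved via the seed graphs $X_1, \dots, X_5$ and Perron--Frobenius comparison with $P^{(1)}_n$, $P^{(2)}_n$, $P^{(3)}_n$, $Q_n$, which are indeed the tools you gesture at.)
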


In Section~\ref{sec:pf}, after stating the Perron-Frobenius theorem, we prove Theorem~\ref{thm:classR} and Corollary~\ref{cor:SnSdashn}.

\section{Applying Perron-Frobenius theory}
\label{sec:pf}
As opposed to imaginary quadratic integer rings, by working with Hermitian matrices over real quadratic integer rings we do not possess the nice property of having a guaranteed integral characteristic polynomial, but we \emph{are} able to make use of the Perron-Frobenius Theorem which we state below.
\subsection{The Perron-Frobenius Theorem}

The \textbf{spectral radius} $\rho(A)$ of a square matrix $A$ is the maximum of the moduli of its eigenvalues.
We define the spectral radius $\rho(G)$ of the graph $G$ corresponding to $A$ to be the spectral radius of $A$.
A real matrix is called \textbf{nonnegative} if all its entries are nonnegative and a graph is called nonnegative if it has a nonnegative adjacency matrix.
Let $A$ and $B$ be real symmetric matrices of dimension $n$ and $m$ respectively with $n \geqslant m$.
We write $A \geqslant B$ if $A$ contains a principal submatrix such that $A - B$ is nonnegative; the inequality is strict unless $A = B$.
For the graphs $G$ and $H$ corresponding to $A$ and $B$ respectively, we write $G \geqslant H$.

\begin{theorem}[Perron-Frobenius Theorem]\cite[Theorem 8.8.1]{Godsil:AlgGraph}\label{thm:PerronFrobenius}
	Suppose $A$ is an indecomposable nonnegative $n \times n$ matrix. Then:
	\renewcommand{\labelenumi}{(\alph{enumi})}
	\begin{enumerate}
		\item The spectral radius $\rho = \rho(A)$ is a simple eigenvalue of $A$ and an eigenvector $\mathbf x$ is an eigenvector for $\rho$ if and only if no entries of $\mathbf x$ are zero, and all have the same sign.
		\item Suppose $A^\prime$ is a nonnegative $n \times n$ matrix such that $A - A^\prime$ is nonnegative. Then $\rho(A^\prime) \leqslant \rho(A)$ with equality if and only if $A = A^\prime$.
	\end{enumerate}
\end{theorem}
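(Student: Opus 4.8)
The matrices arising in this paper are real and symmetric, so I would prove both parts using the Rayleigh quotient; the general nonnegative case follows the same outline with the Collatz--Wielandt characterization in place of the Rayleigh quotient. The first task is to identify $\rho = \rho(A)$ with the largest eigenvalue $\lambda_{\max}(A) = \max_{\norm{x}=1} x^\top A x$. Since $A$ is symmetric its eigenvalues are real, so $\rho = \max(\lambda_{\max}, \abs{\lambda_{\min}})$; letting $u$ be a unit eigenvector for $\lambda_{\min}$ and writing $\abs{u}$ for its entrywise absolute value, nonnegativity of $A$ gives $\abs{\lambda_{\min}} = \abs{u^\top A u} \leqslant \abs{u}^\top A \abs{u} \leqslant \lambda_{\max}$. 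Hence $\rho = \lambda_{\max}$, and any maximizing unit vector $x$ is an eigenvector for $\rho$.

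To prove (a) I would first arrange that the Perron eigenvector is nonnegative and then upgrade this to strict positivity. Replacing $x$ by $\abs{x}$ does not decrease the Rayleigh quotient (again because $A$ is nonnegative), so we may assume $x \geqslant 0$ and $Ax = \rho x$. Indecomposability of $A$ means its underlying graph is connected, so $(I + A)^{n-1}$ has strictly positive entries; since $(I+A)^{n-1} x = (1+\rho)^{n-1} x$ and the left-hand side is strictly positive in every coordinate whenever $x \geqslant 0$ is nonzero, every entry of $x$ must be positive. For simplicity, suppose $y$ is a real eigenvector for $\rho$ independent of $x$; after a sign change $y$ has a positive entry, and with $t = \min_{i : y_i > 0} x_i / y_i$ the vector $x - t y$ is a nonzero nonnegative eigenvector for $\rho$ with a zero coordinate, contradicting the strict positivity just established. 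Thus the $\rho$-eigenspace is one-dimensional, and every eigenvector for $\rho$ is a scalar multiple of the positive Perron vector, giving the ``only if'' direction of the characterization. Conversely, if $Ax' = \mu x'$ with $x'$ having all entries of one sign, then pairing against the positive Perron vector $v$ gives $\rho\, v^\top x' = (Av)^\top x' = v^\top A x' = \mu\, v^\top x'$ with $v^\top x' \neq 0$, forcing $\mu = \rho$; this establishes the remaining (``if'') direction.

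For (b), monotonicity of the spectral radius follows the same variational route: since $A^\prime$ is nonnegative and symmetric, $\rho(A^\prime) = \lambda_{\max}(A^\prime)$ by the absolute-value argument above, and choosing a nonnegative maximizer $x$ with $\norm{x} = 1$ yields $\rho(A^\prime) = x^\top A^\prime x \leqslant x^\top A x \leqslant \lambda_{\max}(A) = \rho(A)$, using $0 \leqslant A^\prime \leqslant A$ entrywise. For the equality case, suppose $\rho(A^\prime) = \rho(A)$, so both inequalities are tight. Tightness of the second forces $x$ to be a maximizer of the Rayleigh quotient of $A$, hence a Perron eigenvector, so $x$ is strictly positive by part (a). Tightness of the first gives $x^\top (A - A^\prime) x = \sum_{i,j} (A - A^\prime)_{ij} x_i x_j = 0$; since $A - A^\prime \geqslant 0$ and every $x_i x_j > 0$, each entry $(A - A^\prime)_{ij}$ vanishes, so $A = A^\prime$.

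The main obstacle is the role of indecomposability: without it the Perron eigenvector can have zero entries and $\rho$ can be a repeated eigenvalue, so the crux is the passage from a nonnegative eigenvector to a strictly positive one via the positivity of $(I+A)^{n-1}$, together with the subtraction trick that turns a hypothetical second eigenvector into a forbidden nonnegative eigenvector with a zero coordinate. The identification $\rho = \lambda_{\max}$ through the triangle inequality is the other step that genuinely uses nonnegativity of the entries, and it is what lets every comparison be run on the single eigenvalue $\lambda_{\max}$ rather than on the modulus of the spectrum.
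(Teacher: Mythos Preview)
The paper does not give its own proof of this statement; it is quoted as Theorem~8.8.1 from Godsil--Royle and used as a black box, so there is nothing in the paper to compare your argument against.

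That said, your proof is correct for the symmetric case, which is all that the paper actually needs. The identification $\rho=\lambda_{\max}$ via $\abs{u^\top A u}\leqslant \abs{u}^\top A\abs{u}$, the strict positivity of the Perron vector from $(I+A)^{n-1}>0$, the subtraction trick for simplicity, and the pairing argument for the converse are all standard and cleanly executed. In part~(b) you are right to apply part~(a) to $A$ rather than to $A'$ when upgrading the shared maximizer $x$ to a strictly positive vector, since only $A$ is assumed indecomposable; this is the one place where the argument could go wrong and you handle it correctly. Your caveat that the general (non-symmetric) nonnegative case requires replacing the Rayleigh quotient by the Collatz--Wielandt formula is also accurate, though for the applications in this paper the symmetric version suffices.
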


\emph{Remark.} Suppose $G$ is a connected graph and $H$ is a nonnegative graph.
An implication of Perron-Frobenius together with interlacing is that if $G > H$ then $\rho(G) > \rho(H)$.
The nonnegative graphs $P^{(1)}_n$ (for $n \geqslant 3$), $P^{(2)}_n$ (for $n \geqslant 2$), $P^{(3)}_n$ (for $n \geqslant 2$), and $Q_n$ (for $n \geqslant 3$) in Figure~\ref{fig:cycloPaths} have an eigenvalue of $2$ corresponding to an eigenvector given by the numbers beneath their vertices.
By Theorem~\ref{thm:PerronFrobenius}, since the eigenvectors given are positive, the graphs $P^{(1)}_n$ (for $n \geqslant 3$), $P^{(2)}_n$ (for $n \geqslant 2$), $P^{(3)}_n$ (for $n \geqslant 2$), and $Q_n$ (for $n \geqslant 3$) all have spectral radius $2$.

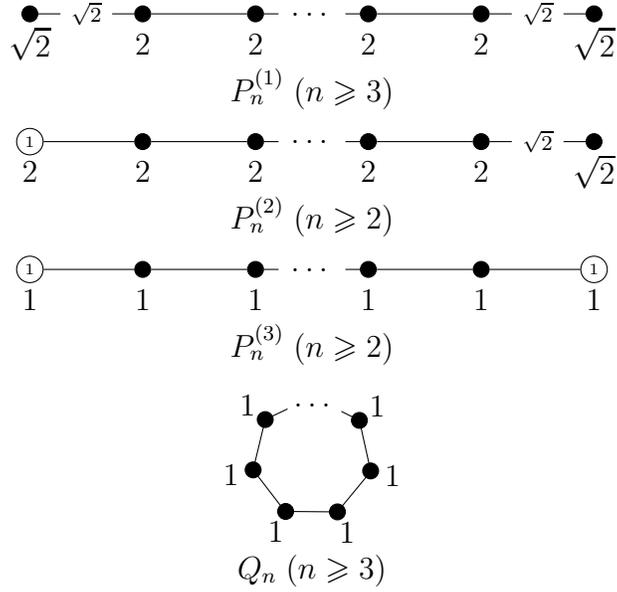
\begin{figure}[htbp]
	\centering
		\begin{tikzpicture}[xscale=1.5]
			\begin{scope}
				\foreach \pos/\name in {{(0,0)/a}, {(1,0)/b}, {(2,0)/c}, {(3,0)/d}, {(4,0)/e}, {(5,0)/f}}
					\node[vertex] (\name) at \pos {}; 
				\foreach \pos/\name in {{(0,-0.4)/\sqrt{2}}, {(1,-0.4)/2}, {(2,-0.4)/2}, {(3,-0.4)/2}, {(4,-0.4)/2}, {(5,-0.4)/\sqrt{2}}}
					\node at \pos {$\name$};
				\node (s1) at (2.3,0) {};
				\node (s2) at (2.7,0) {};
				\node at (2.5,0) {$\dots$};
				\foreach \edgetype/\source/ \dest /\weight in {pedge/b/c/{}, pedge/d/e/{}, pedge/c/s1/{}, pedge/s2/d/{} }
					\path[\edgetype] (\source) -- node[weight2] {$\weight$} (\dest);
				\path[pedge] (a) -- node[weight] {$\sqrt{2}$} (b);
				\path[pedge] (e) -- node[weight] {$\sqrt{2}$} (f);
				\node at (2.5,-1) {$P^{(1)}_n \; (n \geqslant 3)$};
			\end{scope}
			\begin{scope}[yshift=-1.7cm]		
					\foreach \pos/\name in {{(1,0)/b}, {(2,0)/c}, {(3,0)/d}, {(4,0)/e}, {(5,0)/f}}
						\node[vertex] (\name) at \pos {}; 
					\foreach \type/\pos/\name in {{pc/(0,0)/a}}
						\node[\type] (\name) at \pos {$1$};
					\foreach \pos/\name in {{(0,-0.4)/2}, {(1,-0.4)/2}, {(2,-0.4)/2}, {(3,-0.4)/2}, {(4,-0.4)/2}, {(5,-0.4)/\sqrt{2}}}
						\node at \pos {$\name$};
					\node (s1) at (2.3,0) {};
					\node (s2) at (2.7,0) {};
					\node at (2.5,0) {$\dots$};
					\foreach \edgetype/\source/ \dest /\weight in {pedge/a/b/{}, pedge/b/c/{}, pedge/d/e/{}, pedge/c/s1/{}, pedge/s2/d/{} }
						\path[\edgetype] (\source) -- node[weight2] {$\weight$} (\dest);
					\path[pedge] (e) -- node[weight] {$\sqrt{2}$} (f);
					\node at (2.5,-1) {$P^{(2)}_n \; (n \geqslant 2)$};
			\end{scope}
			\begin{scope}[yshift=-3.4cm]
				\foreach \pos/\name in {{(1,0)/b}, {(2,0)/c}, {(3,0)/d}, {(4,0)/e}}
					\node[vertex] (\name) at \pos {}; 
				\foreach \type/\pos/\name in {{pc/(0,0)/a}, {pc/(5,0)/f}}
					\node[\type] (\name) at \pos {$1$};
				\foreach \pos/\name in {{(0,-0.4)/1}, {(1,-0.4)/1}, {(2,-0.4)/1}, {(3,-0.4)/1}, {(4,-0.4)/1}, {(5,-0.4)/1}}
					\node at \pos {$\name$};
				\node (s1) at (2.3,0) {};
				\node (s2) at (2.7,0) {};
				\node at (2.5,0) {$\dots$};
				\foreach \edgetype/\source/ \dest /\weight in {pedge/a/b/{}, pedge/b/c/{}, pedge/d/e/{}, pedge/e/f/{}, pedge/c/s1/{}, pedge/s2/d/{} }
					\path[\edgetype] (\source) -- node[weight2] {$\weight$} (\dest);
				\node at (2.5,-1) {$P^{(3)}_n \; (n \geqslant 2)$};
			\end{scope}
			\end{tikzpicture}
			
			\begin{tikzpicture}
					\newdimen\rad
					\rad=0.8cm
					\newdimen\lorad
					\lorad=0.7cm
					\newdimen\radi
					\radi=1.1cm

					\foreach \x in {141,192,244,295,347,398}
					{
						\draw (\x:\radi) node[empty] {$1$};
				    	\draw (\x:\rad) node[vertex] {};
						\draw[pedge] (\x:\rad) -- (\x+51:\rad);
				    }
					\draw[pedge] (90:\rad) -- (141:\rad);
					\draw (87:\lorad) node[empty] {$\dots$};
					\node at (0,-1.5) {$Q_n \; (n \geqslant 3)$};
		\end{tikzpicture}
	\caption{Four infinite families of nonnegative cyclotomic $\Z[\sqrt{2}]$-graphs each having spectral radius $2$. The numbers on the vertices correspond to an eigenvector with largest eigenvalue $2$. The subscript is the number of vertices.}
	\label{fig:cycloPaths}
\end{figure}

\subsection{Cyclotomic matrices over the compositum of real quadratic integer rings}
\label{sec:classR}

In this section we prove that all matrices in $\mathfrak S^\prime_n$ are necessarily symmetric matrices over one of the rings $\Z$, $\Z[\sqrt{2}]$, $\Z[\varphi]$, or $\Z[\sqrt{3}]$.
Set $R = \mathcal R$, the compositum of all real quadratic integer rings, and let $K$ be the Galois closure of the field generated by elements of $R$ over $\Q$.
Let $A$ be an $R$-matrix in $\mathfrak S^\prime_n$ and let $G$ be its corresponding $R$-graph.
By Lemma~\ref{lem:maxDeg4}, we need only consider entries of $A$ from the set $R^\prime = \{0, \pm 1, \pm \sqrt{2}, \pm \varphi, \pm \bar \varphi, \pm \sqrt{3}, \pm 2\}$; these are the only real algebraic integers from $R$ whose conjugates all square to at most $4$.
Otherwise, we can apply some automorphism $\sigma \in \operatorname{Gal}(K/\Q)$ to $A$ so that some entry squares to more than $4$.
Therefore, without loss of generality, we can take $R$ to be the ring generated by $R^\prime$ over $\Z$ and we can set $K = \Q(\sqrt{2}, \sqrt{3}, \sqrt{5})$.

First we deal with the possibility of $G$ containing a subgraph equivalent to the following graphs:
\[
	\begin{tikzpicture}[auto]
		\foreach \pos/\name in {{(1,0)/b}}
			\node[vertex] (\name) at \pos {}; 
		\foreach \type/\pos/\name in {{pc/(0,0)/a}}
			\node[\type] (\name) at \pos {$\varphi$};
		\foreach \edgetype/\source/ \dest /\weight in {pedge/a/b/{\bar \varphi}}
			\path[\edgetype] (\source) -- node[weight2] {$\weight$} (\dest);
		\node at (0.5,-0.5) {$X_1$};
	\end{tikzpicture}
	\begin{tikzpicture}[auto]
		\foreach \type/\pos/\name in {{pc/(0,0)/a},{nc/(1,0)/b}}
			\node[\type] (\name) at \pos {$1$};
		\foreach \edgetype/\source/ \dest /\weight in {pedge/a/b/{\sqrt{2}}}
			\path[\edgetype] (\source) -- node[weight2] {$\weight$} (\dest);
		\node at (-0.5,0) {};
		\node at (0.5,-0.5) {$X_2$};
	\end{tikzpicture}
	\begin{tikzpicture}[auto]
		\foreach \type/\pos/\name in {{pc/(0,0)/a},{nc/(1,0)/b}}
			\node[\type] (\name) at \pos {$1$};
		\foreach \edgetype/\source/ \dest /\weight in {pedge/a/b/{\sqrt{3}}}
			\path[\edgetype] (\source) -- node[weight2] {$\weight$} (\dest);
		\node at (-0.5,0) {};
		\node at (0.5,-0.5) {$X_3$};
	\end{tikzpicture}
	\begin{tikzpicture}[auto]
		\foreach \type/\pos/\name in {{pc/(0,0)/a},{nc/(1,0)/b}}
			\node[\type] (\name) at \pos {$1$};
		\foreach \edgetype/\source/ \dest /\weight in {pedge/a/b/{\varphi}}
			\path[\edgetype] (\source) -- node[weight2] {$\weight$} (\dest);
		\node at (-0.5,0) {};
		\node at (0.5,-0.5) {$X_4$};
	\end{tikzpicture}
	\begin{tikzpicture}[auto]
		\foreach \type/\pos/\name in {{pc/(0,0)/a},{nc/(1,0)/b}}
			\node[\type] (\name) at \pos {$1$};
		\foreach \edgetype/\source/ \dest /\weight in {pedge/a/b/{}}
			\path[\edgetype] (\source) -- node[weight2] {$\weight$} (\dest);
		\node at (-0.5,0) {};
		\node at (0.5,-0.5) {$X_5$};
	\end{tikzpicture}
\]
We have exhaustively checked all $R$-supergraphs of $X_1$, $X_2$, $X_3$, $X_4$, and $X_5$ that are in $\mathfrak S^\prime_n$ for each $n \in \N$. 
These supergraphs are all equivalent to subgraphs of either $S_2^\dag$, $C_4^{+-}$, $S_4^{(1,\varphi)}$, $S_4^{(3,\varphi)}$, $S_4^{(1,\sqrt{2})}$, $S_7$, $S_8$, or $S_8^\prime$ (see Figures~\ref{fig:maxcycs3}, \ref{fig:maxcycs8}, and \ref{fig:maxcycs10}) and hence are all either $\Z[\sqrt{2}]$-graphs, $\Z[\sqrt{3}]$-graphs, or $\Z[\varphi]$-graphs.
This computation can be checked with little effort; we used {\tt PARI/GP}~\cite{PARI} to implement the following algorithm.
Start with a seed graph $X$ (one of $X_1$, $X_2$, $X_3$, $X_4$, and $X_5$).
Consider all possible ways of attaching a vertex to $X$ so that the matrix corresponding to the resulting graph is in $\mathfrak S_3^\prime$.
For each of the resulting graphs, on (say) $k$ vertices, repeat the process so that the matrices corresponding to the resulting graphs are in $\mathfrak S_{k+1}^\prime$.
This process terminates, and we obtain the list of all $R$-supergraphs of the $X_j$ that are in $\mathfrak S^\prime_n$ for each $n \in \N$.
Henceforth we assume that $X_1$, $X_2$, $X_3$, $X_4$, and $X_5$ are not equivalent to any subgraph of $G$.
We can also exclude $\pm 2$ from being an entry of our matrix $A$ since, by Lemma~\ref{lem:maxDeg4}, any connected graph strictly containing either $S_1$ or $S_2$ does not correspond to a matrix in $\mathfrak S^\prime_n$ for any $n$.

Let $A^\prime$ be a smallest principal submatrix of $A$ with respect to having at least two irrational entries $\alpha$ and $\beta$ such that its corresponding $R$-graph $G^\prime$ is connected.
Suppose $\alpha$ is not conjugate to $\pm \beta$, i.e., $\alpha$ and $\pm \beta$ do not have the same minimal polynomial.
We will show that this supposition violates the condition that $A$ is in $\mathfrak S^\prime_n$.
We can assume that at least one of $\alpha$ and $\beta$ (say $\alpha$) is not equal to $\pm \sqrt{2}$.
Observe that, by a combination of switching and Galois conjugation (using automorphisms from $\operatorname{Gal}(K/\Q)$), we can make all the edge-weights of $G^\prime$ positive and hence we assume that all the off-diagonal entries of $A^\prime$ are nonnegative.

If $G^\prime$ is a triangle then, since we have excluded the subgraphs $X_1$, $X_2$, $X_3$, $X_4$, and $X_5$, we can find a graph $H$ equivalent to $G^\prime$ that satisfies $H > Q_3$.
By the Perron-Frobenius Theorem, the spectral radius of $H$ is strictly greater than $2$; hence, by interlacing, $A$ is equivalent to a matrix that is not in $\mathfrak S_n^\prime$.
Therefore $A$ is not in $\mathfrak S_n^\prime$.
Otherwise, if $G^\prime$ is not a triangle then $G^\prime$ must be a path.
Since $A^\prime$ is minimal with respect to the condition of containing both $\alpha$ and $\beta$ as entries, any induced subpath $p_1p_2 \dots p_k$ of $G^\prime$ must have $w(p_i,p_{i+1}) = 1$ when $i$ is equal to neither $1$ nor $k-1$.
Moreover, the minimality also implies that the charge of $p_j$ for $j \in \left \{2,\dots,k-1\right \}$ is either $0$ or $\pm 1$.

We consider two cases for $G^\prime$: the case where $G^\prime$ is uncharged and the case where $G^\prime$ has a charge.
In the first case we have $H > P^{(1)}_n$ for some $n$ and in the second, we have either $H > P^{(2)}_n$ or $H > P^{(3)}_n$ for some $n$ where, in each case, $H$ is an $R$-graph that is equivalent to $G^\prime$.
By the Perron-Frobenius Theorem, the spectral radius of $H$ is strictly greater than $2$ and hence, by interlacing, $A$ is not in $\mathfrak S_n^\prime$.
Therefore, we have established the following result.

\begin{proposition}\label{pro:oneAtaTime}
	Let $A$ be an indecomposable $\mathcal R$-matrix having as entries two irrational integers $\alpha$ and $\beta$ with $\alpha$ not conjugate to $\pm \beta$.
	Then $A$ is not in $\mathfrak S^\prime_n$.
\end{proposition}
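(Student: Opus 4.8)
The plan is to assume for contradiction that $A\in\mathfrak S^\prime_n$ and to exhibit a switching-equivalent matrix or a Galois conjugate of $A$ having an eigenvalue of modulus greater than $2$; since $\mathfrak S^\prime_n$ is stable under $\operatorname{Gal}(K/\Q)$, under switching, under vertex permutation and under negation, this is enough. First I would reduce the entry set. If some entry $\gamma$ of $A$ had a conjugate $\sigma(\gamma)$ with $\sigma(\gamma)^2>4$, then the corresponding diagonal entry of $\sigma(A)^2$ exceeds $4$, so $\sigma(A)$ has an eigenvalue of modulus $>2$ by interlacing (Theorem~\ref{thm:interlacing}); this is Lemma~\ref{lem:maxDeg4}, and it forces every entry of $A$ into $R^\prime=\{0,\pm1,\pm\sqrt2,\pm\varphi,\pm\bar\varphi,\pm\sqrt3,\pm2\}$. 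An entry $\pm2$ would make the connected $\mathcal R$-graph $G$ of $A$ strictly contain $S_1$ or $S_2$---strictly, because $G$ also carries an irrational entry---hence have a vertex of degree $>4$, again excluded by Lemma~\ref{lem:maxDeg4}. And if $G$ contained a subgraph equivalent to one of the seed graphs $X_1,\dots,X_5$, then by the finite verification described above $G$ would be equivalent to a subgraph of one of $S_2^\dag$, $C_4^{+-}$, $S_4^{(1,\varphi)}$, $S_4^{(3,\varphi)}$, $S_4^{(1,\sqrt2)}$, $S_7$, $S_8$, $S_8^\prime$, each a matrix over a single quadratic integer ring; but $\alpha\not\sim\pm\beta$ places $\alpha$ and $\beta$ in distinct quadratic subfields of $\Q(\sqrt2,\sqrt3,\sqrt5)$ (the admissible irrational elements of $R^\prime$ lying in one such field are mutually conjugate up to sign), so $G$ avoids all of $X_1,\dots,X_5$. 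Henceforth the entries of $A$ lie in $\{0,\pm1,\pm\sqrt2,\pm\varphi,\pm\bar\varphi,\pm\sqrt3\}$.

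Next I would pass to a smallest connected principal submatrix $A^\prime$ of $A$ still carrying two irrational entries lying in distinct quadratic fields; this exists since $G$ is connected and contains $\alpha,\beta$, and it lies in $\mathfrak S^\prime$ by interlacing. By minimality, deleting any vertex of the graph $G^\prime$ of $A^\prime$ either disconnects it or kills one of these two ``features'' (an irrational edge-weight or an irrational charge), which forces $G^\prime$ to be a path or a triangle and forces the interior edges of any induced path of $G^\prime$ to have weight $1$ and interior charges to lie in $\{0,\pm1\}$; thus the two irrational features sit at the ends of $G^\prime$. At most one of them lies in $\Q(\sqrt5)$, so after applying $\sqrt5\mapsto-\sqrt5$ if necessary I may take that one to be $\pm\varphi$ (modulus $>\sqrt2$) rather than $\pm\bar\varphi$. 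Finally, by switching at vertices and using the automorphisms $\sqrt2\mapsto-\sqrt2$ and $\sqrt3\mapsto-\sqrt3$, I would produce a graph $H$ equivalent to $G^\prime$ whose off-diagonal entries are all nonnegative---switching alone suffices for a path, while for a triangle one first uses a Galois automorphism to make the product of the three edge-weights positive.

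It then remains to force $\rho(H)>2$ by comparison with the graphs of Figure~\ref{fig:cycloPaths}, all of spectral radius $2$. If $H$ is a triangle, then---the $X_j$ having been removed---each edge-weight of $H$ has modulus $\geq1$ with at least one $>1$ (the only obstruction, a $\pm\bar\varphi$ edge, has become $\pm\varphi$ unless the conjugate edge $\pm\varphi$ is also present, and in that single residual configuration a direct computation shows every row sum of $H$ still exceeds $2$), so $H>Q_3$, or at worst $H$ has minimum row sum $>2$; either way $\rho(H)>2$ by the Perron--Frobenius Theorem (Theorem~\ref{thm:PerronFrobenius}). If $H$ is a path, then according as $H$ is uncharged or charged one obtains $H>P^{(1)}_m$, or else $H>P^{(2)}_m$ or $H>P^{(3)}_m$, where $m=\abs{V(H)}$: the extremal edge-weights, respectively an extremal charge, of $H$ dominate the features $\sqrt2$, respectively charge $1$, of those graphs, strictly somewhere because the two irrational features are not conjugate up to sign (so, for instance, the two end edge-weights of an uncharged $H$ cannot both lie in $\{\pm\sqrt2\}$); again $\rho(H)>2$. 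Since $H$ is equivalent to an induced subgraph of a Galois conjugate of $G$, interlacing now produces an eigenvalue of that conjugate of $A$ of modulus $>2$, contradicting $A\in\mathfrak S^\prime_n$.

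The crux, I expect, is this last step: verifying that after the normalisations the minimal ``mixed'' graph $H$ really does strictly dominate one of $Q_3$, $P^{(1)}_m$, $P^{(2)}_m$, $P^{(3)}_m$ (or has all row sums exceeding $2$). This is not automatic, because of the small-modulus unit $\bar\varphi$ and, for triangles, the switching-invariant sign of the edge-product; both are controlled by the extra Galois automorphisms, but only once the genuinely exceptional small configurations $X_1,\dots,X_5$ and the weight $\pm2$ have been removed by hand, these being exactly the mixed configurations for which no Perron--Frobenius domination exists.
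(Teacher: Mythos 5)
Your proposal is correct and follows essentially the same route as the paper: restrict entries to $R^\prime$ via Lemma~\ref{lem:maxDeg4}, exclude $\pm 2$ and the seed configurations $X_1,\dots,X_5$ by the finite supergraph check, pass to a minimal connected submatrix carrying two non-conjugate irrational entries (necessarily a path or triangle with trivial interior weights and charges), normalise by switching and Galois conjugation, and conclude by dominating $Q_3$, $P^{(1)}_m$, $P^{(2)}_m$, or $P^{(3)}_m$ and applying Perron--Frobenius with interlacing. Your extra care over the small-modulus unit $\bar\varphi$ (sending it to $\varphi$ via $\sqrt{5}\mapsto-\sqrt{5}$, with the row-sum fallback for triangles) is a refinement of a step the paper states without elaboration, not a different argument.
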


Theorem~\ref{thm:classR} follows immediately.

\subsection{Elements of $\mathfrak S^\prime_n \backslash \mathfrak S_n$}
\label{sec:equalsets}

Here we give a proof of Corollary~\ref{cor:SnSdashn} and enumerate all elements in $\mathfrak S^\prime_n \backslash \mathfrak S_n$ for $n \leqslant 6$.
In Table~\ref{tab:sndsn}, we have tabulated the number of elements of the set $\mathfrak S^\prime_n \backslash \mathfrak S_n$ for $n \leqslant 6$, these are given working up to equivalence.
With respect to Theorem~\ref{thm:classR}, we have also recorded the number of elements in 
$\mathfrak S^\prime_n \backslash \mathfrak S_n$ that lie in each $\mathcal O_{\Q(\sqrt{d})}$-matrix ring for $d > 1$.
We remark that each element of $\mathfrak S^\prime_n$ is contained in a maximal cyclotomic matrix.
Since all subgraphs of the infinite families of maximal cyclotomic matrices are in $\mathfrak S_n$, one can find elements $\mathfrak S^\prime_n \backslash \mathfrak S_n$ by checking subgraphs of the sporadic maximal cyclotomic matrices.

\begin{table}[htbp]
	\begin{center}
	\begin{tabular}{|c|c|c|c|c|}
		\hline
		$n$ & $|\mathfrak S^\prime_n \backslash \mathfrak S_n|$ & $\Z[\varphi]$ & $\Z[\sqrt{2}]$ & $\Z[\sqrt{3}]$ \\
		\hline
		$1$ & $3$ & $1$ & $1$ & $1$ \\
		$2$ & $7$ & $6$ & $1$ & $0$ \\
		$3$ & $4$ & $3$ & $1$ & $0$ \\
		$4$ & $6$ & $6$ & $0$ & $0$ \\
		$5$ & $4$ & $4$ & $0$ & $0$ \\
		$6$ & $1$ & $1$ & $0$ & $0$ \\
		\hline
	\end{tabular}
	\end{center}
	\caption{Up to equivalence, the number of elements of the set $\mathfrak S^\prime_n \backslash \mathfrak S_n$ for $n \leqslant 6$.}
	\label{tab:sndsn}
\end{table}

Now we give a lemma resembling the crystallographic criterion for a Coxeter graph, see Humphreys~\cite[Proposition~6.6]{Hump:Coxeter90}. 

\begin{lemma}\label{lem:snGinvariant}
	Let $A \in \mathfrak S^\prime_n$ be a $\Z[\sqrt{2}]$-matrix having all its charges in $\Z$ and let $G$ be its associated graph.
	Then every cycle of $G$ has an even number of edges of weight $\pm \sqrt{2}$.
	Hence $A$ is in $\mathfrak S_n$.
\end{lemma}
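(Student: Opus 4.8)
The plan is to argue by contradiction: suppose $G$ has a cycle $C$ with an odd number of edges of weight $\pm\sqrt2$, and use Galois conjugation together with switching to expose a forbidden spectral configuration. First I would reduce to the case where $G$ itself is a cycle, since the cycle $C$ arises as an induced subgraph (after possibly deleting chords — but here one should be a little careful, as deleting chords can change the graph; instead one should apply interlacing, so it suffices to show the induced subgraph on $V(C)$, which is a cycle with at most some extra chords, fails to lie in $\mathfrak S_n'$, and then further reduce to a genuine induced cycle). So assume $G$ is a $\Z[\sqrt2]$-cycle on vertices $p_1,\dots,p_k$ with edge weights $w(p_i,p_{i+1})\in\{\pm1,\pm\sqrt2\}$ (the entries $\pm2$, $\pm\varphi$, $\pm\bar\varphi$, $\pm\sqrt3$ having been excluded already by Lemma~\ref{lem:maxDeg4} and Proposition~\ref{pro:oneAtaTime}, since a $\Z[\sqrt2]$-matrix cannot also contain $\sqrt3$- or $\varphi$-type entries, and the $X_i$ are forbidden). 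Also the charges all lie in $\Z$, so after Lemma~\ref{lem:maxDeg4} each charge is $0$ or $\pm1$.

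The key computation is this: for such a cycle, the product $\Pi=\prod_{i} w(p_i,p_{i+1})$ around the cycle is, up to switching, the relevant invariant — switching at a vertex flips the signs of its two incident edges and so preserves $\Pi$, while the characteristic polynomial depends on the $w(p_i,p_{i+1})$ essentially only through this product (together with the charges). Concretely, $\chi_A$ contains a term $\pm2\Pi$ coming from the two ``ways around'' the cycle. If the number of $\pm\sqrt2$ edges is odd, then $\Pi\in\sqrt2\cdot\Z\setminus\{0\}$, so $\Pi$ is irrational, and hence $\chi_A\notin\Z[x]$. Now apply the nontrivial automorphism $\sigma$ of $\Q(\sqrt2)/\Q$: $\sigma$ sends $\sqrt2\mapsto-\sqrt2$, leaving the $\pm1$ entries and the integer charges fixed, so $\sigma(A)$ is again a $\Z[\sqrt2]$-matrix of the same shape, but now with $\sigma(\Pi)=-\Pi$. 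Thus $\chi_{\sigma(A)}\ne\chi_A$ as polynomials, yet both $A$ and $\sigma(A)$ must have spectrum in $[-2,2]$ because $A\in\mathfrak S_n'$. I would then show this is impossible: a cyclic matrix with entries in $\{0,\pm1,\pm\sqrt2\}$ and charges in $\{0,\pm1\}$ and irrational $\Pi$ fails to have all of $A$ and $\sigma(A)$ with spectral radius $\le 2$ — the cleanest route is to exhibit, exactly as in the proof of Proposition~\ref{pro:oneAtaTime}, a graph $H$ equivalent (via switching and Galois conjugation) to the cycle with $H>Q_k$ or $H>P^{(j)}_n$, whose spectral radius then exceeds $2$ by Perron–Frobenius and interlacing. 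The presence of a $\pm\sqrt2$ edge is exactly what makes the cycle ``too heavy'' to fit inside the threshold family $Q_k$ unless the $\sqrt2$-edges can be balanced away, which requires an even number of them.

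Alternatively, and perhaps more robustly, I would avoid re-deriving the spectral estimate and instead invoke the already-completed classification: by Theorem~\ref{thm:class}, every indecomposable cyclotomic $\Z[\sqrt2]$-matrix sits inside one of the listed maximal graphs, and one checks directly on that finite list ($C_{2k}$, $C_{2k+1}$, $S_2^\ddag$, the $S_4^{(j,\sqrt2)}$, $S_8^\dag$) together with the pure $\Z$-graphs that every cycle they contain has an even number of $\pm\sqrt2$-edges — the $\sqrt2$-edges in $C_{2k}$ and $C_{2k+1}$ come in pairs at the two ends, and the sporadic cases are a quick inspection. Combined with the fact that $A\in\mathfrak S_n'$ forces $A$ (being a $\Z[\sqrt2]$-matrix with all conjugates cyclotomic, hence already handled once we know $\chi_A\in\Z[x]$) into this classification, the parity statement follows, and then Proposition~\ref{pro:Ginvariance} applied to the evenness of each cycle gives that $A$ is Galois-invariant, whence $\chi_A\in\Z[x]$ and $A\in\mathfrak S_n$. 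The main obstacle I anticipate is the first reduction — handling chords and making the passage ``cycle of $G$'' $\leadsto$ ``induced cycle we can analyze'' fully rigorous via interlacing — together with pinning down the precise dependence of $\chi_A$ on the edge-weight product so that the $\sigma$-argument cleanly yields $\chi_A\notin\Z[x]$; the spectral/Perron–Frobenius part is then routine given the machinery already set up in Section~\ref{sec:classR}.
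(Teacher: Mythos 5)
Your primary route is essentially the paper's own proof: take a smallest (hence chordless) offending cycle $C$, use the fact that the product of its edge-weights is an odd power of $\sqrt2$ times a rational so that one of $C$, $\sigma(C)$ can be switched to a nonnegative cycle strictly dominating $Q_k$ (or, in the charged case, containing a subpath dominating $P^{(2)}_k$ or $P^{(3)}_k$), and then conclude by Perron--Frobenius and interlacing that some conjugate of $A$ has spectral radius exceeding $2$, contradicting $A\in\mathfrak S'_n$; your discussion of the switching-invariant product $\Pi$ is precisely the justification the paper leaves implicit for why the switching in Case~1 can be carried out. The final step is also the same: evenness of every cycle gives Galois invariance of $G$, and Proposition~\ref{pro:Ginvariance} then gives $\chi_A\in\Z[x]$.

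One caution: your ``alternative'' route via Theorem~\ref{thm:class} is circular as stated. That theorem classifies \emph{cyclotomic} $\Z[\sqrt2]$-matrices, i.e.\ members of $\mathfrak S_n$, whereas the matrix $A$ in the lemma is only known to lie in $\mathfrak S'_n$; you cannot place $A$ inside the classified maximal graphs until you already know $\chi_A\in\Z[x]$, which is exactly what the lemma is proving. So that inspection of the finite list can only serve as a consistency check, not as a proof, and the Perron--Frobenius argument of your first route is the one to keep.
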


\begin{proof}
	Let $\sigma$ be the nontrivial automorphism of $\Z[\sqrt{2}]$ which sends $\sqrt{2}$ to $-\sqrt{2}$.
	Suppose for a contradiction that $G$ contains a cycle having an odd number of edges with weight $\pm \sqrt{2}$ and let $C$ be a smallest such cycle.
	\paragraph{Case 1} 
	\label{par:case_1}
	$C$ is uncharged.
	In this case we can switch either $C$ or $\sigma(C)$ in such a way that the resulting nonnegative cycle $C^\prime$ has $C^\prime > Q_k$ for some $k$.
	Hence, $\rho(C^\prime) > \rho(Q_k) = 2$ and so, by interlacing, we have $\rho(A) \geqslant \rho(C^\prime) > \rho(Q_k) = 2$.
	\paragraph{Case 2} 
	\label{par:case_22}
	$C$ is charged.
	As in the previous section we can exclude $X_2$ and $X_5$ as subgraphs of $G$.
	In the case when $C$ is a triangle, one can find an equivalent cycle $C^\prime$ satisfying $C^\prime > Q_3$.
	Otherwise, $C$ contains a subpath equivalent to a path $C^\prime$ where either $C^\prime > P_k^{(2)}$ or $C^\prime > P_k^{(3)}$ for some $k$.
	Therefore, in each case, $A \not \in \mathfrak S^\prime_n$ which is a contradiction.

	On the other hand, it can be readily seen that if all the cycles of $G$ have an even number of edges of weight $\pm \sqrt{2}$, then $G$ is Galois invariant. 	
\end{proof}

Finally, we give a proof of Corollary~\ref{cor:SnSdashn}.

\begin{proof}[Proof of Corollary~\ref{cor:SnSdashn}]
	We have computed all the sets $\mathfrak S^\prime_n$ and $\mathfrak S_n$ for $n \leqslant 8$.
	We have that $\mathfrak S^\prime_7 = \mathfrak S_7$ and $\mathfrak S^\prime_8 = \mathfrak S_8$.
	By computation and Proposition~\ref{pro:oneAtaTime}, we know that all matrices in $\mathfrak S^\prime_n$ for $n > 8$ are $\Z[\sqrt{2}]$-matrices.
	Thus, it suffices to consider only $\Z[\sqrt{2}]$-matrices.
	From our computation we know that all $\Z[\sqrt{2}]$-matrices in $\mathfrak S^\prime_5$ have all their charges in $\Z$, hence, by interlacing, the same must be true for the sets $\mathfrak S^\prime_k$ for all $k>5$.
	The result then follows from Lemma~\ref{lem:snGinvariant}.
\end{proof}

\bibliographystyle{amsalpha}
\bibliography{../bib}
\end{document}